\newtheorem{note}{Note}
\newtheorem{construction}[theorem]{Construction}
\newcommand{\dom}{\mathrm{dom}}
\newcommand{\cod}{\mathrm{cod}}
\newcommand{\meet}{\wedge}
\newcommand{\join}{\vee}
\newcommand{\catG}{\mathbf{G}}
\newcommand{\X}{\mathbf{X}}
\newcommand{\Y}{\mathbf{Y}}
\title{The Ehresmann-Schein-Nambooripad Theorem for inverse categories}
\author{Darien DeWolf and Dorette Pronk}
\address{Department of Mathematics, Statistics and Computer Science\\
St. Francis Xavier University\\2323 Notre Dame Lane\\
Antigonish, NS B2G 1N5\\Canada\\[5pt]
Department of Mathematics and Statistics\\
Dalhousie University\\
Halifax, NS B3H 4R2\\Canada\\}
\keywords{Inverse semigroup, inverse category, inductive groupoid,
top-heavy locally inductive groupoid, inverse semicategory}
\thanks{Both authors thank NSERC for its funding of this research.}
\begin{document}
\maketitle

\begin{abstract}
The Ehresmann-Schein-Nambooripad (ESN) Theorem asserts an equivalence
between the category of inverse semigroups and the category of inductive
groupoids. In this paper, we consider the category of inverse categories
and functors -- a natural generalization of inverse semigroups --  and
extend the ESN theorem to an equivalence between this category and the
category of top-heavy locally inductive groupoids and locally inductive
functors. From the proof of this extension, we also generalize the ESN
Theorem to an equivalence between the category of inverse semicategories
and the category of  locally inductive groupoids and to an equivalence
between the category of inverse categories with oplax functors and the
category of top-heavy locally inductive groupoids and ordered functors.
\end{abstract}

\section{Introduction}

The Ehresmann-Schein-Nambooripad (ESN) Theorem asserts the existence of
an equivalence between the category of inverse semigroups (with
semigroup homomorphisms) and the category of inductive groupoids (with
inductive functors). A groupoid is called ordered in this context if
there is a compatible (functorial) order on both objects and arrows with
a notion of restriction on the arrows such that an arrow
$f\colon A\to B$ has a unique restriction $f'\colon A'\to B'$ with
$f'\leq f$ for any object $A'\leq A$. For the precise definition see
Definition~\ref{def:orderedgroupoid}, but a category theorist may like
to think of these as groupoids internal to the category of posets with
some additional properties. Ordered functors between these are functors
that preserve the order. Furthermore, an ordered groupoid is called
inductive when the objects form a meet-semilattice and an ordered
functor is inductive when it preserves the meets. The correspondence of
the ESN Theorem is directly extendable to inverse semigroups and
prehomomorphisms when one takes ordered functors, rather than inductive
functors, between the inductive groupoids.

This theorem has been extended to various larger classes of semigroups
such as regular semigroups \cite{nambooripad1, nambooripad2,
nambooripad3}, two-sided restriction semigroups (also called Ehresmann
semigroups) \cite{Lawson:1991jt} and more general restriction groups
\cite{Hollings:2010iq} with either semigroup homomorphisms or $\meet$-
or $\join$-prehomomorphisms. The main ideas in this context have focused
on either changing the requirement for a meet-semilattice structure to a
different order structure on the objects of the groupoid, or on
generalizing to inductive categories rather than groupoids.

Our approach here is to generalize this equivalence in a different
direction. Semigroups can be viewed  as single-object semicategories and
we want to obtain a `multi-object' version of the correspondence.
As groupoids can be thought of as the multi-object version of groups,
we think of inverse categories as a multi-object version of inverse
semigroups. In this paper, we prove a new generalization of the ESN
theorem which extends the result to inverse categories. Since we are
generalizing the concept of {\em inverse} semigroup, we will remain
within the category of groupoids. They will still be ordered, but the
order structure will only be locally inductive in a suitable sense: the
objects need to form a disjoint union of meet-semilattices. Since
inverse categories have identities, we further require that  the
meet-semilattices have a top-element. If we instead generalize to
inverse semicategories, this requirement is not needed. Locally
inductive functors, ordered functors that preserve all meets that exist,
will correspond to functors of inverse semicategories
(Corollary~\ref{cor:irscat_og_equiv}). We will also show that the
category of inverse categories and oplax functors is equivalent to the
category of top-heavy locally inductive groupoids and locally inductive
functors, generalizing the classical result that the category of inverse
semigroups and prehomomorphisms is equivalent to the category of
inductive groupoids and ordered functors (Theorem~\ref{thm:oplax_is}).

In private communication, Lawson provided an unpublished preprint in
which he provides a similar construction to provide a proof of our main
result. His constructions rely on the existence of (maximal) identities
in an inverse category, while ours relies on the partitioning of the
objects into meet-semilattices. The advantage of our approach is that
the ESN Theorem is a direct corollary of the equivalence between
top-heavy locally inductive groupoids by simply removing identities and
applying our construction to a single-object inverse semicategory.

The groupoid we construct for an inverse category was also independently
considered in the work of Linckelmann \cite{Linckelmann:2013gd} on
category algebras. Linckelmann observes that this groupoid has the same
category algebra as the original inverse category, giving the category
algebra of an inverse category the structure of a groupoid algebra:
a groupoid algebra over a commutative ring is a direct product of matrix
rings. In this paper, we introduce this groupoid with an ordered
structure  and observe the important characterizing properties of the
order structure to obtain an equivalence of categories between the
category of inverse categories and the category of these top-heavy
locally inductive groupoids.

From the semigroup perspective, this raises the question of whether
there are appropriate multi-object versions of the other classes of
semigroups mentioned above which then may be shown to be equivalent to
appropriate versions of locally inductive categories.

\subsection{Inductive groupoids and inverse semigroups}

Inductive groupoids are a class of groupoids whose arrows are equipped
with a partial order satisfying certain conditions and whose objects
form a meet-semilattice. Charles Ehresmann used ordered groupoids to
model pseudogroups while inverse semigroups, introduced by Gordon
Preston \cite{preston1}, were concurrently used as an alternate model
for pseudogroups. Ehresmann was certainly aware of the connection
between ordered (inductive) groupoids and inverse semigroups, as it was
Ehresmann who first introduced the tensor product required to make the
correspondence work.
Boris Schein \cite{schein1} made this connection explicit, requiring
that the set of objects form a meet-semilattice, thus guaranteeing the
existence of this tensor product for all arrows of the groupoid.
K.S.S.~Nambooripad \cite{nambooripad1, nambooripad2, nambooripad3,
nambooripad1979} independently developed the theory of so-called regular
systems and their correspondence to so-called regular groupoids.
This theory is, in fact, more general and specializes to the
correspondence of inverse semigroups to inductive groupoids. A more
detailed history of inverse semigroups, inductive groupoids and their
applications can be found in Hollings' \cite{Hollings:2009uh}. In this
section, we present the modern exposition of this correspondence, which
can be found in Mark Lawson's book \cite{lawson98}.

\begin{definition} \label{def:orderedgroupoid}
A groupoid $\catG$ is said to be an \emph{ordered groupoid} whenever
there is a partial order $\leq$ on its arrows satisfying the following
four conditions:
\begin{enumerate}[(i)]
\item\label{def:orderedgroupoid_inversepreserve}
For each arrow $f,g\in G,$ $f \leq g$ implies $f^{-1} \leq g^{-1}.$
\item\label{def:orderedgroupoid_compositionpreserve}
For all arrows $a,A,b,B\in G$ such that $a\leq A,$ $b\leq B$ and the
composites $ab$ and $AB$ exist, $ab \leq AB.$
\item\label{def:orderedgroupoid_restriction}
For each arrow $f:A'\rightarrow B$ in $G$ and each object $A \leq A'$ in
$G,$ there exists a unique \emph{restriction of $f$ to $A$}, denoted
$[f |_* A],$ such that $\dom[f |_* A] = A$ and $[f |_* A]\leq f.$
\item\label{def:orderedgroupoid_corestriction}
For each arrow $f:A\rightarrow B'$ in $G$ and objects $B \leq B'$ in
$G,$ there exists a unique \emph{corestriction of $f$ to $B$}, denoted
$[B\,{}_*| f],$ such that $\cod[B\,{}_*| f] = B$ and
$[B\,{}_*| f]\leq f.$
\end{enumerate}
An ordered groupoid is said to be an \emph{inductive groupoid} whenever
its objects form a meet-semilattice.
\end{definition}

Though it is sometimes convenient to explicitly give both the
restrictions and corestrictions in an ordered groupoid, the following
proposition makes it necessary only to include one of them in any
proofs.

\begin{proposition}[\cite{lawson98}]
In Definition~\ref{def:orderedgroupoid}, conditions
(\ref{def:orderedgroupoid_restriction}) and
(\ref{def:orderedgroupoid_corestriction}) are equivalent.
\end{proposition}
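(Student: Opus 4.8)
The plan is to use the inverse operation of the groupoid to convert restrictions into corestrictions and back, exploiting the facts that inversion reverses domain and codomain, that $(f^{-1})^{-1}=f$, and that inversion is order-preserving by condition (\ref{def:orderedgroupoid_inversepreserve}). By symmetry it suffices to prove that (\ref{def:orderedgroupoid_restriction}) implies (\ref{def:orderedgroupoid_corestriction}); the reverse implication is obtained by replacing each arrow with its inverse and exchanging the roles of domain and codomain throughout.

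So assume (\ref{def:orderedgroupoid_restriction}) holds, let $f\colon A\rightarrow B'$ be an arrow, and let $B\leq B'$ be an object. I would pass to the inverse arrow $f^{-1}\colon B'\rightarrow A$ and note that $B\leq B'=\dom f^{-1}$, so condition (\ref{def:orderedgroupoid_restriction}) supplies a unique restriction $[f^{-1}|_* B]$ with $\dom[f^{-1}|_* B]=B$ and $[f^{-1}|_* B]\leq f^{-1}$. The candidate corestriction is then defined by
\[
[B\,{}_*| f] := \bigl([f^{-1}|_* B]\bigr)^{-1}.
\]
Two checks confirm this is a corestriction. First, since inversion swaps domain and codomain, $\cod[B\,{}_*| f]=\dom[f^{-1}|_* B]=B$. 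Second, applying condition (\ref{def:orderedgroupoid_inversepreserve}) to $[f^{-1}|_* B]\leq f^{-1}$ yields $\bigl([f^{-1}|_* B]\bigr)^{-1}\leq (f^{-1})^{-1}=f$, so $[B\,{}_*| f]\leq f$.

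For uniqueness, suppose $g\leq f$ is any arrow with $\cod g=B$. Condition (\ref{def:orderedgroupoid_inversepreserve}) gives $g^{-1}\leq f^{-1}$, and $\dom g^{-1}=\cod g=B$, so $g^{-1}$ is a restriction of $f^{-1}$ to $B$. The uniqueness clause of (\ref{def:orderedgroupoid_restriction}) forces $g^{-1}=[f^{-1}|_* B]$, hence $g=[B\,{}_*| f]$, which establishes both existence and uniqueness required by (\ref{def:orderedgroupoid_corestriction}). I do not anticipate any real obstacle here: the content of the argument is the clean duality between restriction and corestriction mediated by inversion, and the only point requiring care is tracking which arrow condition (\ref{def:orderedgroupoid_inversepreserve}) is applied to at each step.
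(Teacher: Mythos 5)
Your proof is correct and is the standard argument for this fact: defining $[B\,{}_*|f]=\bigl([f^{-1}|_*B]\bigr)^{-1}$ and transporting existence and uniqueness through the order-preserving inversion of condition (\ref{def:orderedgroupoid_inversepreserve}) is precisely how this equivalence is established in the cited reference, which the paper itself defers to rather than reproving. No gaps: the domain/codomain bookkeeping and the uniqueness step are all handled correctly.
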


\begin{definition} \label{def:orderedgroupoid_tensor}
Let $\mathbf{G}$ be an ordered groupoid with arrows
$\alpha,\beta\in \mathbf{G}.$ If $\dom(\alpha)\meet \cod(\beta)$ exists,
the \emph{tensor product} $\alpha\otimes \beta$ of $\alpha$ and $\beta$
is defined as
\[ \alpha\otimes \beta = [\alpha \, |_* \, \dom(\alpha) \meet
   \cod(\beta)][\dom(\alpha) \meet \cod(\beta) \, {}_*|\, \beta]. \]
\end{definition}

\begin{proposition}[\cite{lawson98}] \label{prop:associative_tensor}
Let $\catG$ be an inductive groupoid.
This tensor product is associative and admits pseudoinverses given by
the inverses in the inductive groupoid, making $(\mathbf{G}_1, \otimes)$
an inverse semigroup.
\end{proposition}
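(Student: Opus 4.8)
The plan is to prove associativity first, since it holds in any ordered groupoid wherever the relevant meets exist, and then to deduce the inverse semigroup structure in the inductive case. My first step would be to assemble a small toolkit of lemmas about restriction and corestriction, each following directly from the uniqueness clauses of Definition \ref{def:orderedgroupoid} together with conditions (\ref{def:orderedgroupoid_inversepreserve}) and (\ref{def:orderedgroupoid_compositionpreserve}). The essential ones are: (a) inversion exchanges restriction and corestriction, $[B \,{}_*|\, f] = ([f^{-1}|_* B])^{-1}$, so every corestriction may be rewritten as a restriction of the inverse; (b) restriction distributes over composition, $[(g\circ f)|_* A] = [g|_* \cod([f|_* A])]\circ [f|_* A]$ whenever $g\circ f$ is defined and $A \leq \dom(f)$, with the dual statement for corestriction; (c) restriction is transitive, $[[f|_* A']|_* A] = [f|_* A]$ for $A \leq A' \leq \dom(f)$, and restriction to the full domain is trivial, $[f|_* \dom(f)] = f$; and (d) a restriction and a corestriction of the same arrow, taken to compatible objects, may be performed in either order and yield the same arrow. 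Each is proved by exhibiting a candidate arrow below $f$ (or below $g\circ f$) with the prescribed domain or codomain and invoking uniqueness.

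With this toolkit in hand, associativity becomes a bookkeeping argument. Writing $\alpha\otimes\beta = [\alpha|_* m]\circ[m\,{}_*|\,\beta]$ with $m = \dom(\alpha)\meet\cod(\beta)$, I would expand both $(\alpha\otimes\beta)\otimes\gamma$ and $\alpha\otimes(\beta\otimes\gamma)$ into composites of three (co)restrictions of $\alpha$, $\beta$, and $\gamma$. The two parenthesizations initially present different iterated meets — one built from $\dom(\alpha)\meet\cod(\beta)$ and the other from $\dom(\beta)\meet\cod(\gamma)$ — and the crux is to show they determine the same arrow. Using (b) to push restrictions through the internal composites and (d) to commute the leftover restriction past the corestriction, each side collapses to a single composite governed by one compound object; associativity and commutativity of the semilattice meet then identify these two compound objects. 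I expect this meet-matching to be the main obstacle: forming the first tensor product already shrinks the intermediate domain and codomain before the outer meet is taken, so one must track this shrinking precisely and verify that it is exactly reconciled by the associativity of $\meet$.

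For the second assertion, assume now that the objects form a meet-semilattice, so that $\otimes$ is everywhere defined and $(\mathbf{G}_1,\otimes)$ is a semigroup. I would first show that the groupoid inverse is a semigroup pseudoinverse: since $\cod(\alpha^{-1}) = \dom(\alpha)$, the meet defining $\alpha\otimes\alpha^{-1}$ is $\dom(\alpha)$, and by (c) this tensor collapses to $\alpha\circ\alpha^{-1} = 1_{\cod(\alpha)}$; a second short computation gives $\alpha\otimes\alpha^{-1}\otimes\alpha = \alpha$ and symmetrically $\alpha^{-1}\otimes\alpha\otimes\alpha^{-1} = \alpha^{-1}$. Next I would identify the idempotents: computing $1_e\otimes 1_{e'} = 1_{e\meet e'}$ shows the identity arrows form a copy of the meet-semilattice, hence a commuting family of idempotents, while a domain/codomain comparison shows conversely that any $\otimes$-idempotent forces $\dom = \cod$ and is therefore an idempotent automorphism, that is, some $1_e$. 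Having exhibited a regular semigroup whose idempotents commute, I would invoke the standard characterization to conclude that $(\mathbf{G}_1,\otimes)$ is an inverse semigroup, with $\alpha^{-1}$ the unique inverse of $\alpha$.
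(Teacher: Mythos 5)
Correct. The paper gives no proof of its own here (it cites Lawson's book), and your reconstruction follows essentially the same standard route as the cited source: uniqueness-of-(co)restriction lemmas to establish associativity of the tensor, then identifying the idempotents as the identity arrows and invoking the characterization of inverse semigroups as regular semigroups with commuting idempotents.
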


\begin{proof}[Proof sketch.]
For any pair of arrows in $\catG,$ one can show that the set
\[ \langle \alpha, \beta\rangle = \{(\alpha',\beta')|\,\cod(\alpha') =
   \dom(\beta'), \alpha'\leq \alpha, \beta'\leq \beta\} \]
contains a unique maximal element $(\alpha', \beta')$ with
$\alpha\otimes\beta = \beta' \circ \alpha'.$
Since defined by composition, this tensor product is therefore
associative.
\end{proof}

\begin{proposition} \label{prop:identity_restrictions}
For all objects $A \leq B$ of an ordered groupoid,
$[1_B \,|_*\, A] = 1_A = [A \,{}_*|\, 1_B].$
\end{proposition}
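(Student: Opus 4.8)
The plan is to exploit the uniqueness built into the restriction and corestriction clauses of Definition \ref{def:orderedgroupoid}. For the first equality, I would observe that, by condition (\ref{def:orderedgroupoid_restriction}), the restriction $[1_B \,|_*\, A]$ is the \emph{unique} arrow $g$ satisfying $\dom(g) = A$ and $g \leq 1_B$. It therefore suffices to verify that the identity $1_A$ satisfies both of these conditions; uniqueness then forces $[1_B \,|_*\, A] = 1_A$.

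The first condition, $\dom(1_A) = A$, is immediate. The only point to check is that $1_A \leq 1_B$. Here I would invoke the convention, standard for ordered groupoids, that objects are identified with their identity arrows and that the order on objects is precisely the restriction of the arrow order to the identities. Under this identification the hypothesis $A \leq B$ says exactly that $1_A \leq 1_B$, which is the remaining condition. Hence $1_A$ meets the defining properties of $[1_B \,|_*\, A]$, giving the first equality.

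For the second equality I would argue dually, using condition (\ref{def:orderedgroupoid_corestriction}) in place of (\ref{def:orderedgroupoid_restriction}): the corestriction $[A \,{}_*|\, 1_B]$ is the unique arrow $h$ with $\cod(h) = A$ and $h \leq 1_B$, and $1_A$ again satisfies both requirements, since $\cod(1_A) = A$ and $1_A \leq 1_B$ as above. Uniqueness then yields $1_A = [A \,{}_*|\, 1_B]$. Alternatively, one could deduce the corestriction identity directly from the restriction identity via condition (\ref{def:orderedgroupoid_inversepreserve}), using that $1_A^{-1} = 1_A$ and $1_B^{-1} = 1_B$.

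I expect no substantive obstacle: the entire content reduces to the identification of the order on objects with the order on identity arrows, after which both equalities follow immediately from the uniqueness of restrictions and corestrictions. The only step requiring minor care is making this identification explicit, since the statement compares objects ($A \leq B$) while the conclusion concerns the identity arrows $1_A$ and $1_B$.
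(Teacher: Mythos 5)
Your proof is correct and follows the same route as the paper's: both arguments identify the order on objects with the order on identity arrows (so $A \leq B$ gives $1_A \leq 1_B$), then note $1_A$ has (co)domain $A$ and invoke uniqueness of (co)restrictions. Your closing remark about deriving the corestriction case from the restriction case via condition (i) is a fine optional shortcut, but otherwise the arguments coincide.
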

\begin{proof}
The partial order on arrows induces the partial order on the objects of
an ordered groupoid. Since an object of a category is identified by the
identity arrow on that object, we have that $1_A \leq 1_B.$ Since the
(co)domain of $1_A$ is $A,$ we have
$[1_B \,|_*\, A] = 1_A = [A \,{}_*|\, 1_B]$ by the uniqueness of
(co)restrictions
\end{proof}

\begin{definition} \label{def:ordered_functor}
A morphism $F: \mathbf{G} \rightarrow \mathbf{H}$ of ordered groupoids
(an \emph{ordered functor}) is a functor such that, for all arrows
$f \leq g$ in $\mathbf{G},$ $F(f) \leq F(g)$ in $\mathbf{H}.$ An ordered
functor between inductive groupoids is said to be \emph{inductive}
whenever it preserves the meet structure on objects.
\end{definition}

\begin{notation}
We denote the category of ordered groupoids and ordered functors by
$\mathbf{oGrpd}$ and the category of inductive groupoids and inductive
functors by $\mathbf{iGrpd}.$
\end{notation}

We will now briefly review Lawson's description of functorial
constructions that form the equivalence of categories between the
category of inverse semigroups and the category of inductive groupoids.
We remind the reader that full details can be found in \cite{lawson98}.

\begin{construction}[Inverse Semigroups to Inductive Groupoids]
Given an inverse semigroup $(S, \bullet),$ define an inductive groupoid
$\mathcal{G}(S)$ with the following data:
\begin{itemize}
\item Objects: $\mathcal{G}(S)_0 = E(S),$ the idempotents in $S.$ Since
      $S$ is an inverse semigroup, $E(S)$ is a meet-semilattice with
      meets given by the product in $S.$
\item Arrows: For each element $s\in S,$ there is an arrow
      $s : s^\bullet s \rightarrow ss^\bullet$ (we remind the reader
      that $s^\bullet$ denotes the partial inverse of $s).$ Composition
      is given by multiplication in $S$ and identities are the elements
      of $E(S).$
\item Inverses: For each arrow $s : s^\bullet s \rightarrow ss^\bullet$
      in $\mathcal{G}(S),$ define $s^{-1} = s^\bullet,$ its
      pseudoinverse in $S.$
\item The partial order on arrows is given by the natural partial order
      ($s\leq t$ if and only if $s = te$ for some idempotent $e)$ on the
      elements of $S.$ It can be checked that this partial order
      satisfies conditions (\ref{def:orderedgroupoid_inversepreserve})
      and (\ref{def:orderedgroupoid_compositionpreserve}) of an ordered
      groupoid.
\item The (co)restrictions are also given by multiplication in $S.$
      This can be checked to satisfy condition
      (\ref{def:orderedgroupoid_restriction}) of an ordered groupoid.
\end{itemize}
\end{construction}

\begin{construction}[Inductive Groupoids to Inverse Semigroups]
Given an inductive groupoid $(\catG, \leq),$ define an inverse semigroup
$\mathcal{S}(\catG)$ whose elements are the arrows of $\catG$ and whose
multiplication is given by the tensor product. This is an inverse
semigroup operation with inverses those from $\catG$
(Proposition~\ref{prop:associative_tensor}).
\end{construction}

\begin{theorem}[ESN, \cite{lawson98}]
\label{thm:ESN}
The constructions $\mathcal{G}$ and $\mathcal{S}$ are functorial and
form an equivalence of categories
\[
\xymatrix{
\mathbf{iGrpd} \ar[r]<+0.3pc>^-{\mathcal{S}} &
\mathbf{iSgp} \ar[l]<+0.3pc>^-{\mathcal{G}}
}
\]
\end{theorem}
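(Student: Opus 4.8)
The plan is to prove the statement in two stages: first that each construction extends to a functor, and then that the two functors are mutually quasi-inverse by exhibiting natural isomorphisms $\mathcal{S}\mathcal{G} \cong \mathrm{id}_{\mathbf{IS}}$ and $\mathcal{G}\mathcal{S} \cong \mathrm{id}_{\mathbf{IGrpd}}$. Throughout I would lean on the fact, noted in the Construction, that in $\mathcal{G}(S)$ a restriction $[s\,|_*\,e]$ is simply the product $se$ and a corestriction $[e\,{}_*|\,t]$ is the product $et$, so that tensor products in $\mathcal{G}(S)$ unwind into ordinary semigroup multiplication.

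For functoriality of $\mathcal{S}$, given an inductive functor $F\colon \mathbf{G}\to\mathbf{H}$ I would set $\mathcal{S}(F)$ to be $F$ on arrows and check it is a semigroup homomorphism: since $F$ preserves composition, order, and meets of objects, it preserves restrictions and corestrictions (these are characterized uniquely by Definition \ref{def:orderedgroupoid}), hence preserves the tensor product of Definition \ref{def:orderedgroupoid_tensor}; functoriality in $F$ is then immediate. For functoriality of $\mathcal{G}$, given a homomorphism $\phi\colon S\to T$ I would let $\mathcal{G}(\phi)$ act as $\phi$ on arrows and as $\phi$ restricted to idempotents on objects; because $\phi$ preserves products it sends idempotents to idempotents, respects the maps $s\mapsto s^\bullet s$ and $s\mapsto ss^\bullet$ defining domains and codomains, preserves the natural partial order, and preserves meets of idempotents, so $\mathcal{G}(\phi)$ is an inductive functor.

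For the first composite, $\mathcal{S}\mathcal{G}(S)$ has the same underlying set as $S$, so I would show the identity map is an isomorphism of inverse semigroups by computing, for $s,t\in S$, that $s\otimes t = [s\,|_*\,s^\bullet s\meet tt^\bullet][s^\bullet s\meet tt^\bullet\,{}_*|\,t] = s(s^\bullet s\, tt^\bullet)t = st$, using $ss^\bullet s=s$ and $tt^\bullet t=t$; naturality is automatic since every component is an identity on underlying sets. For the second composite, I would build an isomorphism $\mathbf{G}\to\mathcal{G}\mathcal{S}(\mathbf{G})$ that is the identity on arrows. The objects of $\mathcal{G}\mathcal{S}(\mathbf{G})$ are the idempotents of $(\mathbf{G}_1,\otimes)$, and I would first verify these are exactly the identity arrows of $\mathbf{G}$: an idempotent $f$ equals its own pseudoinverse $f^{-1}$, forcing $\dom f=\cod f$, whereupon $f\otimes f = f\circ f = f$ collapses to $f=1_{\dom f}$. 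With objects thus identified with $\mathbf{G}_0$, matching of domains, codomains, composition, and inverses is routine.

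The main obstacle I anticipate is verifying that the natural partial order on $\mathcal{S}(\mathbf{G})$ recovers the original order of $\mathbf{G}$, which is exactly what makes $\mathbf{G}\to\mathcal{G}\mathcal{S}(\mathbf{G})$ an \emph{ordered} (indeed inductive) isomorphism. Here I would show that $s\le t$ in the natural order, i.e. $s = t\otimes e$ for an idempotent $e=1_{A}$, holds precisely when $A\le\dom t$ and $s=[t\,|_*\,A]$, so that the natural order coincides with the restriction order of $\mathbf{G}$; the meet-semilattice structure then transports along the object identification. Checking naturality of this isomorphism in $\mathbf{G}$, together with the compatibility of all of these identifications with Proposition \ref{prop:identity_restrictions} and Proposition \ref{prop:associative_tensor}, would complete the argument.
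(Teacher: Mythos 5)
Your proposal is correct, and since the paper states this theorem as a known result with only a citation to Lawson's book (giving the constructions $\mathcal{G}$ and $\mathcal{S}$ but no proof of its own), the relevant comparison is with the standard argument there, which your sketch reconstructs faithfully. The steps you isolate---inductive functors preserving (co)restrictions and hence tensors, the computation $s\otimes t = s(s^\bullet s\,tt^\bullet)t = st$ giving $\mathcal{S}\mathcal{G}(S)\cong S$, the identification of the idempotents of $(\mathbf{G}_1,\otimes)$ with the identity arrows of $\mathbf{G}$, and the check that the natural partial order on $\mathcal{S}(\mathbf{G})$ coincides with the restriction order so that $\mathcal{G}\mathcal{S}(\mathbf{G})\cong\mathbf{G}$ as inductive groupoids---are exactly the key points of that proof.
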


\subsection{Inverse categories as restriction categories}

\begin{definition}[\cite{Cockett:2002te}]
\label{def:restriction_category}
A restriction structure on a category $\mathbf{X}$ is an assignment of
an arrow $\overline{f}:A\rightarrow A$ to each arrow $f:A\rightarrow B$
in $\mathbf{X}$ satisfying the following four conditions:
\begin{itemize}
\item[(R.1)] \label{def:rc_R1} For all maps $f,$ $f\,\overline{f} = f.$
\item[(R.2)] \label{def:rc_R2} For all maps $f:A\rightarrow B$ and
  $g: A\rightarrow B',$
  $\overline{f}\,\overline{g} = \overline{g}\,\overline{f}.$
\item[(R.3)] \label{def:rc_R3} For all maps $f:A\rightarrow B$ and
  $g:A\rightarrow B',$
  $\overline{g\,\overline{f}} = \overline{g}\,\overline{f}.$
\item[(R.4)] \label{def:rc_R4} For all maps $f:B\rightarrow A$ and
  $g:A\rightarrow B',$
  $\overline{g}\, f = f \,\overline{gf}.$
\end{itemize}
A category equipped with a restriction structure is called a
\emph{restriction category}.
\end{definition}

\begin{definition}[\cite{Cockett:2002te}]
A \emph{restriction functor} $F: \X \rightarrow \Y$ between restriction
categories is a functor
which preserves the restriction idempotents;
$F\left(\overline{f}\right) = \overline{F(f)}$ for all $f\in \X_1.$
\end{definition}

\begin{example} \label{ex:rcats}
Examples of restriction categories:
\begin{enumerate}[(a)]
\item
$\mathbf{Par}$ as defined above is the prototypical example of a
restriction category. The axioms (R.1) -- (R.4) required for
$\mathbf{Par}$ to be a restriction category are easily verified. We can
interpret expressions such as $f\,\overline{g}$ as ``$f$ restricted to
where $g$ is defined''.
\item\label{ex:stable_monics}
Let $\mathbf{C}$ be an ordinary category equipped with a stable system
$\mathcal{M}$ of monics (all details of this example can be found in
\cite{Cockett:2002te}). Define a category
$\mathrm{Par}(\mathbf{C},\mathcal{M})$ with the following data:
\begin{itemize}
\item Objects: Same objects as $\mathbf{C}.$
\item Arrows: Isomorphism classes of spans
\[
\xymatrix{
X & \,\,D \ar@{>->}[l]_i \ar[r]^f & Y,
}\]
where $i \in \mathcal{M}.$ We will sometimes denote such an arrow
(actually, its isomorphism class) as $(i,f).$
\item Composition: Composition is given by pullback.
\item Restrictions: Given any arrow $(i,f),$ the assignment
$\overline{(i,f)} = (i,i)$ defines a restriction structure on
$\mathrm{Par}(\mathbf{C},\mathcal{M}).$
\end{itemize}
\end{enumerate}
\end{example}

The next lemma lists some useful identities that will be used, without
reference, to make calculations in restriction categories.
\begin{lemma}[\cite{Cockett:2002te}]
\label{lem:rc_props}
If $\mathbf{X}$ is a restriction category, then:
\begin{enumerate}[(i)]
\item \label{lem:rc_props_i}
  $\overline{f}$ is idempotent;
\item \label{lem:rc_props_ii}
  $\overline{f}\,\overline{gf} = \overline{gf};$
\item \label{lem:rc_props_iii}
  $\overline{\overline{g}f} = \overline{gf};$
\item \label{lem:rc_props_iv}
  $\overline{\overline{f}} = \overline{f};$
\item \label{lem:rc_props_v}
  $\overline{\overline{g}\overline{f}} = \overline{g}\overline{f};$
\item \label{lem:rc_props_vi}
  if $f$ is monic, then $\overline{f} = 1;$
\item \label{lem:rc_props_vii}
  $f\overline{g} = f$ implies $\overline{f} = \overline{f}\overline{g}.$
\end{enumerate}
\end{lemma}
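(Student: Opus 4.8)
The plan is to derive all seven identities by purely equational reasoning from the four restriction axioms (R.1)--(R.4), bootstrapping the later parts from the earlier ones; no diagrams or auxiliary constructions are needed. The organising observation is that $\overline{f}$, $\overline{gf}$ and the other restriction idempotents that appear all live on $\dom f$, so that (R.2) and (R.3) apply to them freely, and that composition is read right-to-left throughout.

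First I would establish \eqref{lem:rc_props_i}. Applying (R.3) with $g := f$ gives $\overline{f\,\overline{f}} = \overline{f}\,\overline{f}$, while (R.1) gives $f\,\overline{f} = f$ and hence $\overline{f\,\overline{f}} = \overline{f}$; comparing the two yields $\overline{f}\,\overline{f} = \overline{f}$. Next comes \eqref{lem:rc_props_ii}, the pivotal identity: from (R.1) one has $gf\,\overline{f} = g(f\,\overline{f}) = gf$, so $\overline{gf} = \overline{gf\,\overline{f}}$; then (R.3) gives $\overline{gf\,\overline{f}} = \overline{gf}\,\overline{f}$; and finally (R.2) commutes the two idempotents to give $\overline{gf} = \overline{f}\,\overline{gf}$.

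For \eqref{lem:rc_props_iii} I would bring in (R.4), the one axiom not yet used: it rewrites $\overline{g}f$ as $f\,\overline{gf}$. Taking restrictions and applying (R.3) and then \eqref{lem:rc_props_ii} gives $\overline{\overline{g}f} = \overline{f\,\overline{gf}} = \overline{f}\,\overline{gf} = \overline{gf}$. Identity \eqref{lem:rc_props_iv} then drops out as the special case of \eqref{lem:rc_props_iii} in which the inner map is an identity (take the outer map to be $f$ and the inner map to be $1$, so that $\overline{g}f$ becomes $\overline{f}$ and $gf$ becomes $f$); alternatively it follows directly from \eqref{lem:rc_props_i}, (R.1) and (R.2). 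Identity \eqref{lem:rc_props_v} is then immediate from (R.3) together with \eqref{lem:rc_props_iv}, since $\overline{\overline{g}\,\overline{f}} = \overline{\overline{g}}\,\overline{f} = \overline{g}\,\overline{f}$. For \eqref{lem:rc_props_vi}, (R.1) gives $f\,\overline{f} = f = f\,1$, and left-cancelling the monic $f$ forces $\overline{f} = 1$. Finally \eqref{lem:rc_props_vii} is a one-line consequence of (R.3): from $f\overline{g} = f$ one gets $\overline{f} = \overline{f\overline{g}} = \overline{f}\,\overline{g}$.

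The routine parts are \eqref{lem:rc_props_i}, \eqref{lem:rc_props_v}, \eqref{lem:rc_props_vi} and \eqref{lem:rc_props_vii}, each a single application of one axiom. The only genuine obstacle is the coupling of \eqref{lem:rc_props_ii} and \eqref{lem:rc_props_iii}: one must recognise that \eqref{lem:rc_props_ii} is obtained by absorbing $\overline{f}$ into $gf$ via (R.1) and then pulling the restriction back out with (R.3), and that \eqref{lem:rc_props_iii} requires (R.4) to trade the codomain-side idempotent $\overline{g}$ for the domain-side idempotent $\overline{gf}$ before \eqref{lem:rc_props_ii} can be applied. Once \eqref{lem:rc_props_ii} and \eqref{lem:rc_props_iii} are in hand, the remaining identities follow mechanically.
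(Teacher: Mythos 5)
Your proof is correct, and since the paper gives no proof of this lemma at all (it is quoted directly from Cockett and Lack), there is nothing for it to diverge from: your equational bootstrap --- (i) and (ii) from (R.1)--(R.3), (iii) via (R.4), then (iv), (v) and (vii) as formal consequences, with (vi) by cancelling the monic --- is essentially the standard derivation in the cited source, with all domain conditions for applying (R.2) and (R.3) satisfied where you invoke them. One aside is inaccurate: (iv) does not follow ``directly from (i), (R.1) and (R.2)'' alone, since those only yield $\overline{f}\,\overline{\overline{f}} = \overline{f}$; the usual short independent route is $\overline{\overline{f}} = \overline{1\,\overline{f}} = \overline{1}\,\overline{f} = \overline{f}$, which still uses (R.3). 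But since your primary derivation of (iv), as the special case of (iii) with inner map an identity, is sound, this does not affect the correctness of the proof.
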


\begin{note}
A restriction category $\mathbf{X}$ has a natural, locally partially
ordered 2-category structure: for any two parallel arrows
$f,g:C\rightarrow D$ in $\mathbf{X},$ we define a partial order by
$f\leq g$ if and only if $f = g\overline{f}.$ Notice that if $f\leq g,$
then
\[ \overline{g}\overline{\overline{f}}
  = \overline{g}\overline{f}
  = \overline{g\overline{f}}
  = \overline{f} \]
and thus $\overline{f}\leq \overline{g}.$
\end{note}

\begin{proposition} \label{prop:2cat_properties}
\label{prop:2cat_properties_preserved_by_composition} Suppose that
$a, A, b$ and $B$ are arrows in a restriction category $\mathbf{X}$ with
$a\leq A$ and $b\leq B.$ If the composites $ab$ and $AB$ exist, then
$ab\leq AB.$
\end{proposition}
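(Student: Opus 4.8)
The plan is to unwind the definition of the order and reduce everything to a restriction-algebra computation. By the Note preceding the proposition, $a \le A$ means $a = A\,\overline{a}$ and $b \le B$ means $b = B\,\overline{b}$, and the goal $ab \le AB$ is, by definition, the identity $ab = (AB)\,\overline{ab}$. So after fixing types — writing $a, A\colon P \to Q$ and $b, B\colon R \to P$ (the existence of $ab$ and $AB$ forces $\cod b = \dom a$, and parallelism of $a,A$ and of $b,B$ makes this consistent) — the whole statement becomes the single equation $ab = AB\,\overline{ab}$, which I would verify by rewriting the left-hand side.

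First I would substitute to get $ab = A\,\overline{a}\,B\,\overline{b}$. The key move is to slide the restriction idempotent $\overline{a}$ (living on $\dom a = P = \cod B$) across $B$. This is exactly what (R.4) does: applied with $f = B$ and $g = a$ it gives $\overline{a}\,B = B\,\overline{aB}$, so that $ab = A\,B\,\overline{aB}\,\overline{b}$. It then remains to recognize the trailing idempotent $\overline{aB}\,\overline{b}$ as $\overline{ab}$. For this I would note that $b = B\,\overline{b}$ already gives $ab = (aB)\,\overline{b}$, hence $\overline{ab} = \overline{(aB)\,\overline{b}}$, and then apply (R.3) (with the two arrows $aB$ and $b$, which share the domain $R$) to obtain $\overline{(aB)\,\overline{b}} = \overline{aB}\,\overline{b}$. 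Chaining these equalities gives $ab = AB\,\overline{aB}\,\overline{b} = AB\,\overline{ab}$, which is precisely $ab \le AB$.

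The computation is short, so there is no deep obstacle; the only real care is bookkeeping. I must keep in mind the composition convention (the axioms as stated, e.g. $f\,\overline{f_A} = f$, read in right-to-left order) so that domains and codomains line up and (R.4) and (R.3) are each applied to genuinely composable, correctly typed arrows. Matching the abstract variables of (R.4) and (R.3) to $a$, $B$, $b$ in the correct order is the one place an error could creep in. I would also remark that this monotonicity is a shadow of the locally ordered $2$-category structure recorded in the Note above, but the direct restriction-algebra argument via (R.4) and (R.3) is the cleanest route and the one I would write out in full.
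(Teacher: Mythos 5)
Your proof is correct and is essentially the paper's own argument run in the opposite direction: the paper verifies $AB\,\overline{ab}=ab$ by inserting $\overline{b}$ (via the identity $\overline{b}\,\overline{ab}=\overline{ab}$), substituting $B\overline{b}=b$, and applying (R.4) in the form $b\,\overline{ab}=\overline{a}\,b$, whereas you expand $ab=A\,\overline{a}\,B\,\overline{b}$, apply (R.4) in the form $\overline{a}\,B=B\,\overline{aB}$, and close with (R.3) to identify $\overline{aB}\,\overline{b}=\overline{ab}$. Both computations pivot on a single application of (R.4) together with an equivalent bookkeeping identity, so your proposal matches the paper's proof in all essentials.
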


\begin{proof}
Suppose that $a,A,b$ and $B$ are arrows in $\mathbf{X}$ with $a\leq A,$
$b\leq B$ and such that the composites $ab$ and $AB$ exist. Then
\[ AB\overline{ab}
  = AB\overline{b}\,\overline{ab}
  = Ab\overline{ab} = A\overline{a}b
  = ab \]
and thus $ab \leq AB.$
\end{proof}

\begin{definition} \label{def:total}
A map $f$ in a restriction category $\mathbf{X}$ is called \emph{total}
whenever $\overline{f} = 1.$
\end{definition}

\begin{lemma}[\cite{Cockett:2002te}]
\label{lem:total_props}
If $\mathbf{X}$ is a restriction category, then:
\begin{enumerate}[(i)]
\item every monomorphism is total;
\item if $f$ and $g$ are total, then $gf$ is total;
\item if $gf$ is total, then $f$ is total;
\item the total maps form a subcategory, denoted
      $\mathrm{Tot}(\mathbf{X}).$
\end{enumerate}
\end{lemma}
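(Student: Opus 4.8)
The plan is to dispatch the four parts in sequence, leaning almost entirely on the restriction identities already recorded in Lemma \ref{lem:rc_props} together with the defining axioms (R.1)--(R.4); each implication reduces to substituting a known restriction equal to $1$ into one such identity. For part (i), I would argue directly from (R.1): if $f\colon A\to B$ is monic, then $f\,\overline{f} = f = f\,1_A$, and cancelling $f$ gives $\overline{f} = 1_A$, so $f$ is total. This is of course precisely Lemma \ref{lem:rc_props}(\ref{lem:rc_props_vi}), so one may alternatively cite it outright.

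For part (ii), suppose $f\colon A\to B$ and $g\colon B\to C$ are total, so $\overline{f} = 1_A$ and $\overline{g} = 1_B$. The relevant identity is Lemma \ref{lem:rc_props}(\ref{lem:rc_props_iii}), namely $\overline{\overline{g}f} = \overline{gf}$: substituting $\overline{g} = 1_B$ turns the left-hand side into $\overline{1_B f} = \overline{f} = 1_A$, whence $\overline{gf} = 1_A$ and $gf$ is total. For part (iii) I would instead use Lemma \ref{lem:rc_props}(\ref{lem:rc_props_ii}), $\overline{f}\,\overline{gf} = \overline{gf}$: assuming only that $gf$ is total, setting $\overline{gf} = 1_A$ collapses this equation to $\overline{f} = 1_A$, so $f$ is total.

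Part (iv) then follows by assembling what precedes. Each identity $1_A$ is total, either by part (i) (identities being monic) or directly from (R.1), which gives $1_A\,\overline{1_A} = 1_A$ and hence $\overline{1_A} = 1_A$; and part (ii) shows the composite of two total maps is total. Closure under identities and composition is exactly what is needed for the total maps to form a subcategory $\mathrm{Tot}(\mathbf{X})$.

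I expect no genuine obstacle here: the entire content lies in selecting the correct identity from Lemma \ref{lem:rc_props} for each of the three implications. The only point demanding care is the bookkeeping of domains and codomains when invoking (\ref{lem:rc_props_ii}) and (\ref{lem:rc_props_iii}) -- one must confirm that the composites $\overline{g}f$ and $gf$ are legitimately formed for $f\colon A\to B$ and $g\colon B\to C$, and that the resulting restriction idempotents genuinely live on the intended object $A$, so that the substitution of $1_A$ or $1_B$ is valid.
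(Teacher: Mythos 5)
Your proof is correct in all four parts. There is, however, nothing in the paper to compare it against: the paper states this lemma as a citation to Cockett and Lack \cite{Cockett:2002te} and gives no proof, so your argument supplies details the paper deliberately omits. Each step checks out: monic cancellation applied to (R.1) gives (i) (equivalently, Lemma \ref{lem:rc_props}(\ref{lem:rc_props_vi})); substituting $\overline{g} = 1_B$ into $\overline{\overline{g}f} = \overline{gf}$ (Lemma \ref{lem:rc_props}(\ref{lem:rc_props_iii})) gives (ii); substituting $\overline{gf} = 1_A$ into $\overline{f}\,\overline{gf} = \overline{gf}$ (Lemma \ref{lem:rc_props}(\ref{lem:rc_props_ii})) gives (iii); and (iv) follows since $\overline{1_A} = 1_A$ makes identities total and (ii) gives closure under composition, so the total maps form a wide subcategory $\mathrm{Tot}(\mathbf{X})$. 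Your concern about domains and codomains is easily discharged: for $f\colon A\to B$ and $g\colon B\to C$, both $\overline{\overline{g}f}$ and $\overline{gf}$ are endomorphisms of $A$, so all the substitutions of $1_A$ and $1_B$ are legitimate.
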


\begin{definition} \label{def:rc_morphism}
A morphism $F: \mathbf{X} \rightarrow \mathbf{Y}$ of restriction
categories (a \emph{restriction functor}) is a functor such that
$F\left(\overline{f}\right) = \overline{F(f)}$ for each $f\in X_1.$
\end{definition}

\subsubsection*{Inverse categories}

As groupoids are for groups, we will use a structure describing
multi-object inverse semigroups. Inverse semigroups with units are
exactly single-object inverse categories, so it seems that inverse
(semi)categories could be appropriate for such a role.

\begin{definition}[\cite{kastl1979}]
\label{def:inverse_category_nonrestriction}
A category $\mathbf{X}$ is said to be an \emph{inverse category}
whenever, for each arrow $f:A \rightarrow B$ in $\mathbf{X},$ there
exists a unique $f^\circ : B \rightarrow A$ in $\mathbf{X}$ such that
$f\circ f^\circ \circ f = f$ and
$f^\circ \circ f \circ f^\circ = f^\circ.$
\end{definition}

\begin{definition} \label{def:restricted_inverse_isomorphism}
A map $f$ in a restriction category $\mathbf{X}$ is called a
\emph{restricted isomorphism} whenever there exists a map $ g$ -- called
a \emph{restricted inverse} of $f$ -- such that $gf = \overline{f}$ and
$fg = \overline{g}.$
\end{definition}

Following from the commutation of idempotents (Restriction Category
Axiom (\ref{def:rc_R2}), we have the following property of restricted
isomorphisms:
\begin{theorem}[Lemma 2.18(vii), \cite{Cockett:2002te}]
If $f$ is a restricted isomorphism, then its restricted inverse is
necessarily unique.
\end{theorem}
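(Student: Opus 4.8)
The plan is to assume that $f\colon A\to B$ has two restricted inverses $g,h\colon B\to A$ and to deduce $g=h$ purely by the algebraic manipulations licensed by the restriction axioms, with no appeal to genuine invertibility. Writing out the defining equations, I have $gf=\overline{f}$, $fg=\overline{g}$, $hf=\overline{f}$ and $fh=\overline{h}$; note in particular that $g$ and $h$ are parallel and share the restriction $\overline{f}=gf=hf$, which is the value I intend to shuttle between them.

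The first key step is to observe that $\overline{f}$ acts as a left unit on each restricted inverse. Using (R.1) in the form $g\,\overline{g}=g$ together with $fg=\overline{g}$, I would compute $g=g\,\overline{g}=g(fg)=(gf)g=\overline{f}\,g$, and symmetrically $h=\overline{f}\,h$. Substituting the competing expression $\overline{f}=hf$ for the shared restriction then gives $g=\overline{f}\,g=(hf)g=h(fg)=h\,\overline{g}$, and by the same computation with the roles of $g$ and $h$ exchanged, $h=g\,\overline{h}$. In the induced partial order these two identities say precisely $g\le h$ and $h\le g$, so the remaining task is to turn this mutual domination into an honest equality.

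The second step converts these into an equality of restriction idempotents. Applying (R.3) to $g=h\,\overline{g}$ yields $\overline{g}=\overline{h\,\overline{g}}=\overline{h}\,\overline{g}$, and likewise $\overline{h}=\overline{g}\,\overline{h}$. Since restriction idempotents commute by (R.2), I then conclude $\overline{g}=\overline{h}\,\overline{g}=\overline{g}\,\overline{h}=\overline{h}$. Feeding $\overline{g}=\overline{h}$ back into $g=h\,\overline{g}$ and invoking (R.1) one last time gives $g=h\,\overline{g}=h\,\overline{h}=h$, which closes the argument.

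The main obstacle is the initial strategic choice rather than any hard calculation: the natural temptation is to try to cancel $f$ as one would in a group, but there is no two-sided inverse to cancel with. The correct substitute is to prove the two ``restricted'' identities $g=h\,\overline{g}$ and $h=g\,\overline{h}$ first, and only afterwards reconcile the domains of definedness via the commutation of idempotents to recover antisymmetry. Once that plan is fixed, every individual step is a one-line application of a single restriction axiom, so I expect no further difficulty.
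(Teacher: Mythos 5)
Your proof is correct, and it follows exactly the route the paper points to: the paper gives no proof of its own, citing Cockett--Lack and remarking only that uniqueness ``follows from the commutation of idempotents (R.2)'', which is precisely the pivot of your argument (reconciling $g = h\,\overline{g}$ and $h = g\,\overline{h}$ via $\overline{g} = \overline{h}$). Every step checks out against the axioms as stated in the paper, including the domain bookkeeping needed to apply (R.3).
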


\begin{note}
If a category $\X$ has the \emph{property} of being an inverse category,
one can define a restriction \emph{structure} on $\X$ by defining
$\overline{f} = f^\circ f.$ Indeed, with this restriction structure,
every arrow in $\X$ is a restricted isomorphism and the restricted
inverse of an arrow $f$ is exactly $f^\circ.$ This justifies the
following notation and definition.
\end{note}

\begin{notation}
Given a map $f$ in a restriction category $\mathbf{X},$ we denote its
restricted inverse (if it exists) by $f^\circ.$
\end{notation}

\begin{definition} \label{def:inverse_category}
A restriction category $\mathbf{X}$ is called an
\emph{inverse category}, whenever every map $f$ is a restricted
isomorphism.
\end{definition}

\begin{example} Some inverse categories:
\begin{enumerate}[(a)]
\item The category of sets and partial bijections.
\item Any inverse semigroup with unit is a single-object inverse
      category.
\item Any groupoid is an inverse category with all arrows total.
\end{enumerate}
\end{example}

\begin{lemma}[\cite{Cockett:2002te}]
\label{lem:restriction_functor_props}
If $F:\mathbf{X}\rightarrow \mathbf{Y}$ is a restriction functor, then
$F$ preserves
\begin{enumerate}[(i)]
\item total maps,
\item restriction idempotents,
\item restricted sections and
\item restricted isomorphisms.\label{lem:restriction_functor_props_isos}
\end{enumerate}
\end{lemma}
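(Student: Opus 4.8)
The plan is to derive all four preservation statements directly from the two defining features of a restriction functor: that $F$ is an honest functor (so $F(1_A) = 1_{F(A)}$ and $F(gf) = F(g)\,F(f)$) and that it satisfies $F(\overline{f}) = \overline{F(f)}$. Each item then reduces to pushing a single defining equation through $F$ and re-reading the result in $\mathbf{Y}$, so the argument is entirely formal.

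I would dispatch (i) and (ii) first. For (i), if $f$ is total then $\overline{f} = 1$, whence $\overline{F(f)} = F(\overline{f}) = F(1) = 1$, so $F(f)$ is total. For (ii), a restriction idempotent of $\mathbf{X}$ has the form $\overline{f}$, and its image is $F(\overline{f}) = \overline{F(f)}$, which is by definition a restriction idempotent of $\mathbf{Y}$; equivalently, if $e$ satisfies $\overline{e} = e$ then $\overline{F(e)} = F(\overline{e}) = F(e)$.

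For (iii) and (iv) I would chase the characterizing equations. If $f$ is a restricted section with retraction $g$, characterized by $gf = \overline{f}$, then applying $F$ yields $F(g)\,F(f) = F(gf) = F(\overline{f}) = \overline{F(f)}$, exhibiting $F(f)$ as a restricted section with retraction $F(g)$. Case (iv) is the same computation run on both of the defining equations of a restricted isomorphism: from $gf = \overline{f}$ and $fg = \overline{g}$ (with $g = f^\circ$) one obtains
\[
F(g)\,F(f) = \overline{F(f)} \qquad\text{and}\qquad F(f)\,F(g) = \overline{F(g)},
\]
so $F(f)$ is a restricted isomorphism whose restricted inverse is $F(g) = F(f^\circ)$. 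The uniqueness of restricted inverses established earlier then upgrades this from ``$F(g)$ is \emph{a} restricted inverse'' to $F(f)^\circ = F(f^\circ)$, i.e.\ restriction functors commute with $(-)^\circ$.

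I do not expect any genuine obstacle: the content is purely a matter of transporting identities across $F$. The only point requiring care is bookkeeping of the definitions — in particular recalling exactly which single equation characterizes a restricted section (as opposed to the pair characterizing a full restricted isomorphism), so that the correct identity is transported in (iii), and invoking uniqueness at the end of (iv) so that $F(f^\circ)$ is identified as \emph{the} restricted inverse rather than merely one.
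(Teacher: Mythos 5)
Your proof is correct: all four items follow, exactly as you say, by transporting the defining equations ($\overline{f}=1$; $e=\overline{e}$; $gf=\overline{f}$; $gf=\overline{f}$ and $fg=\overline{g}$) across $F$ using functoriality and $F(\overline{f})=\overline{F(f)}$, with the uniqueness theorem quoted earlier in the paper upgrading (iv) to the identity $F(f)^{\circ}=F(f^{\circ})$. The paper itself gives no proof --- it cites the result from Cockett--Lack --- and your argument is precisely the standard one, so there is nothing to compare beyond noting that your care about the single-equation characterization of a restricted section (versus the pair for a restricted isomorphism) is exactly the right bookkeeping point.
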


\begin{note}
Any functor between inverse categories is a restriction functor
preserving restricted isomorphisms. This follows from the restriction
structure  and restricted isomorphisms being defined as specific
composites. We will therefore omit the words ``inverse'' and
``restriction'' when speaking of functors between inverse categories.
\end{note}

As expected, restriction idempotents are their own restricted inverse.

\begin{proposition} \label{prop:inverse_of_restriction}
In an inverse category,
$\left(\,\overline{f}\,\right)^\circ = \overline{f}$ for all arrows $f.$
\end{proposition}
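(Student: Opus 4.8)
The plan is to exhibit $\overline{f}$ itself as a restricted inverse of $\overline{f}$ and then appeal to uniqueness. Recall from Definition \ref{def:restricted_inverse_isomorphism} that to show a map $g$ is the restricted inverse of a map $h$, it suffices to verify the two equations $gh = \overline{h}$ and $hg = \overline{g}$. Specializing to $h = \overline{f}$ and taking the candidate $g = \overline{f}$, I therefore need only check that $\overline{f}\,\overline{f} = \overline{\overline{f}}$ (which covers both required equations at once, since here $g = h = \overline{f}$).

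The verification is immediate from Lemma \ref{lem:rc_props}. By part (\ref{lem:rc_props_i}), $\overline{f}$ is idempotent, so the left-hand side satisfies $\overline{f}\,\overline{f} = \overline{f}$. By part (\ref{lem:rc_props_iv}), $\overline{\overline{f}} = \overline{f}$, so the right-hand side also equals $\overline{f}$. Hence $\overline{f}\,\overline{f} = \overline{f} = \overline{\overline{f}}$, and both defining equations for a restricted inverse hold with $h = g = \overline{f}$. This shows $\overline{f}$ is \emph{a} restricted inverse of $\overline{f}$.

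Finally, I would invoke the uniqueness of restricted inverses in an inverse category (the theorem stated just after Definition \ref{def:restricted_inverse_isomorphism}) to conclude that this restricted inverse is \emph{the} restricted inverse, i.e.\ $\left(\,\overline{f}\,\right)^\circ = \overline{f}$. There is no real obstacle here: the entire argument reduces to the two elementary identities $\overline{f}\,\overline{f} = \overline{f}$ and $\overline{\overline{f}} = \overline{f}$ together with uniqueness, and the only point requiring a moment's attention is noticing that the two equations of Definition \ref{def:restricted_inverse_isomorphism} collapse to a single identity because the arrow and its proposed inverse coincide.
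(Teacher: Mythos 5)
Your proof is correct, but it takes a different route from the paper's. The paper proves the identity by direct computation: writing $\overline{f} = f^\circ f$ and using the standard facts that $(-)^\circ$ reverses composites and is an involution, it computes
\[ \left(\,\overline{f}\,\right)^\circ = (f^\circ f)^\circ = f^{\circ}(f^{\circ})^{\circ} = f^\circ f = \overline{f}. \]
You instead verify the definition head-on: you exhibit $\overline{f}$ as a restricted inverse of itself via the single identity $\overline{f}\,\overline{f} = \overline{f} = \overline{\overline{f}}$ (Lemma \ref{lem:rc_props}, parts (\ref{lem:rc_props_i}) and (\ref{lem:rc_props_iv})), and then invoke the uniqueness of restricted inverses to conclude. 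The trade-off is worth noting. The paper's computation is shorter but silently relies on the identities $(gf)^\circ = f^\circ g^\circ$ and $(f^\circ)^\circ = f$, which are standard for inverse categories but are never stated as lemmas in this paper (they themselves are consequences of the uniqueness theorem you cite). Your argument is more self-contained relative to what the paper actually establishes: everything you use — the two restriction identities and the uniqueness of restricted inverses — appears explicitly in the text. Your observation that the two defining equations of Definition \ref{def:restricted_inverse_isomorphism} collapse to one because the map and its candidate inverse coincide is also exactly the right point to flag.
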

\begin{proof}
Since all arrows in an inverse category are restricted isomorphisms,
\[ \left(\,\overline{f}\,\right)^\circ
  = (f^\circ f)^\circ
  = f^{\circ}(f^{\circ})^{\circ}
  = f^\circ f
  = \overline{f}. \]
\end{proof}

It is clear that inverse categories, interpreted as restriction
categories in Definition~\ref{def:inverse_category}, are exactly the
same as inverse categories interpreted as multi-object inverse
semigroups in Definition~\ref{def:inverse_category_nonrestriction}; that
is restrictions come for free in an inverse category and are given by
$\overline{f} = f^\circ f.$ In this paper, we choose to think in terms
of restriction categories for two reasons: firstly, the choice of
notation in restriction categories facilitates calculations. Secondly,
we prefer to think of (finite) inverse semigroups as collections of
partial automorphisms on a (finite) set whose idempotents are partial
identities -- inverse categories in terms of restriction categories
explicitly make use of this intuition.

\begin{notation}
We denote the category of inverse categories and functors by
$\mathbf{iCat}.$
\end{notation}

\section{Main result}

In this section, we introduce the notion of top-heavy locally inductive
groupoids: ordered groupoids whose objects may be partitioned into
meet-semilattices, each of which contain a top element. We will then
give functorial constructions of top-heavy locally inductive groupoids
from inverse categories, and vice versa. These constructions will then
be seen to give an equivalence of categories between $\mathbf{iCat}$ and
$\mathbf{tliGrpd}$ (the category of top-heavy locally inductive
groupoids). The identities of an inverse category are seen to correspond
to the tops of the meet-semilattices in a top-heavy locally inductive
groupoid and the equivalence can thus be immediately generalized to an
equivalence between the category of inverse semicategories and
semifunctors and the category of locally inductive groupoids and locally
inductive functors. Finally, we end this section with a short discussion
of a categorical analogue of the classical result in semigroup
theory that the category of inverse semigroups and prehomomorphisms is
equivalent to the category of inductive groupoids and ordered functors.
Explicitly, we show that the category of inverse categories and oplax
functors is equivalent to the category of top-heavy locally inductive
groupoids and ordered functors.
\begin{definition}
Let $A$ be an object of a restriction category $\mathbf{X}.$ Let $E_A$
denote the set of restrictions of all endomorphisms on $A.$ That is,
\[ E_A = \left\{\overline{f}:A\rightarrow A |
                f: A\rightarrow A \in \mathbf{X} \right\}.\]
\end{definition}
Notice that, for any $f: A \rightarrow B$ in $\mathbf{X},$ we have
$\overline{f}:A \rightarrow A \in E_A,$ since
$\overline{\overline{f}} = \overline{f}.$ The reason for specifying that
the restrictions in $E_A$ come from endomorphisms in $\mathbf{X},$ then
serves no use further than simply reminding us that the equivalence we
are trying to establish here is based on the observation that an inverse
category is, at each object, an inverse semigroup (with identity).

\begin{proposition} \label{prop:EA_meetsemilattice}
For each object $A$ of a restriction category $\mathbf{X},$ $E_A$ is a
meet-semilattice with meets given by
$\overline{a} \meet \overline{b} = \overline{a}\overline{b}.$
In addition, $E_A$ has top element $1_A.$
\end{proposition}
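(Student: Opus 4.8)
The plan is to verify three things in turn: that the proposed operation $\overline{a}\meet\overline{b}=\overline{a}\,\overline{b}$ is a well-defined binary operation on $E_A$, that it is the meet with respect to the natural partial order on arrows (i.e. the greatest lower bound), and that $1_A$ is the top element. Throughout I would work entirely within the algebra of the restriction axioms, using the identities collected in Lemma \ref{lem:rc_props} without further comment.

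First I would establish closure and that the operation lands back in $E_A$. Given $\overline{a},\overline{b}\in E_A$, their composite $\overline{a}\,\overline{b}$ is an endomorphism of $A$, and by Lemma \ref{lem:rc_props}(\ref{lem:rc_props_v}) we have $\overline{\overline{a}\,\overline{b}}=\overline{a}\,\overline{b}$, so $\overline{a}\,\overline{b}$ is itself a restriction idempotent and hence an element of $E_A$. Commutativity $\overline{a}\,\overline{b}=\overline{b}\,\overline{a}$ is exactly axiom (R.2), idempotency of each element is Lemma \ref{lem:rc_props}(\ref{lem:rc_props_i}), and associativity is inherited from composition in $\mathbf{X}$. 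Thus $(E_A,\cdot)$ is a commutative idempotent monoid once the unit is identified.

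Next I would identify the induced order and show $\overline{a}\,\overline{b}$ is the meet. On a commutative band the natural semilattice order is $x\leq y\iff xy=x$; I must check this coincides with the restriction order $f\leq g\iff f=g\overline{f}$ inherited from the ambient restriction category (recall the Note preceding Proposition \ref{prop:2cat_properties}). For restriction idempotents these agree: if $\overline{a}\leq\overline{b}$ in the restriction sense then $\overline{a}=\overline{b}\,\overline{\overline{a}}=\overline{b}\,\overline{a}$ by Lemma \ref{lem:rc_props}(\ref{lem:rc_props_iv}), and conversely $\overline{a}=\overline{b}\,\overline{a}$ gives $\overline{a}=\overline{b}\,\overline{\overline{a}}$, which is precisely $\overline{a}\leq\overline{b}$. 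Given this, I would verify $\overline{a}\,\overline{b}$ is a lower bound of both $\overline{a}$ and $\overline{b}$ using commutativity and idempotency ($(\overline{a}\,\overline{b})\overline{a}=\overline{a}\,\overline{b}$, and symmetrically), and that any common lower bound $\overline{c}$ satisfies $\overline{c}=\overline{c}\,\overline{a}=\overline{c}\,\overline{b}$, whence $\overline{c}\,(\overline{a}\,\overline{b})=\overline{c}$, i.e. $\overline{c}\leq\overline{a}\,\overline{b}$. That establishes the greatest-lower-bound property.

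Finally, for the top element I would note $1_A=\overline{1_A}\in E_A$ since identities are total (their restriction is the identity), and for any $\overline{a}\in E_A$ we have $1_A\cdot\overline{a}=\overline{a}$, so $\overline{a}\leq 1_A$ in the semilattice order, making $1_A$ the top. The main obstacle I anticipate is not any single calculation but the bookkeeping required to confirm that the two partial orders genuinely coincide on $E_A$; once that identification is clean, the meet-semilattice and top-element claims follow directly from the band structure.
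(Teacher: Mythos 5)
Your proposal is correct, and every step goes through; but it is organized around a different scaffold than the paper's proof, so a brief comparison is in order. The paper never passes through band theory: it takes the restriction order $f\leq g\iff f=g\overline{f}$ on $E_A$ as given (inherited from $\mathbf{X}$) and verifies the meet axioms directly in that order by restriction calculus --- the lower-bound step is the computation $\overline{a}\,\overline{\overline{a}\,\overline{b}}=\overline{a}\,\overline{a}\,\overline{b}=\overline{a}\,\overline{b}$, and the greatest-lower-bound step is the computation $\overline{d}=\overline{a}\,\overline{d}=\overline{a}\,\overline{b}\,\overline{d}$ for a common lower bound $\overline{d}$. You instead observe that $(E_A,\cdot)$ is a commutative idempotent monoid, invoke the general fact that such a monoid is a meet-semilattice under $x\leq y\iff xy=x$, and then reconcile that order with the restriction order via $\overline{\overline{a}}=\overline{a}$. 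The underlying algebra is nearly identical; what your route buys is conceptual clarity (the semilattice structure comes ``for free'' from the band structure), at the cost of the order-coincidence bookkeeping you correctly flag --- bookkeeping the paper avoids entirely by never introducing the second order. Two smaller points in your favor: your explicit closure check that $\overline{a}\,\overline{b}\in E_A$ (via Lemma \ref{lem:rc_props}(\ref{lem:rc_props_v})) is left implicit in the paper; and your greatest-lower-bound argument is the standard one, whereas the paper phrases that step as a uniqueness claim under the additional hypothesis $\overline{a}\meet\overline{b}\leq\overline{d}$, a hypothesis which, read literally, would only show the product is a \emph{maximal} lower bound --- although the paper's own intermediate computation $\overline{d}=\overline{a}\,\overline{b}\,\overline{d}$ already yields $\overline{d}\leq\overline{a}\,\overline{b}$ without it. So your version of that step is, if anything, tidier.
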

\begin{proof}
First of all, $E_A$ is a poset with the natural partial order inherited
from $\mathbf{X}.$ We now show that $E_A$ has finite meets given by
$\overline{a}\meet \overline{b} = \overline{a}\overline{b}:$
\begin{itemize}
\item First, it is a lower bound:
\[\overline{a}\,\overline{\overline{a}\meet\overline{b}}
  = \overline{a}\,\overline{\overline{a}\overline{b}}
  =  \overline{a}\,\overline{a}\overline{b}
  = \overline{a}\,\overline{b}
  = \overline{a}\meet\overline{b} \]
and thus $\overline{a}\meet\overline{b} \leq \overline{a}.$ Similarly,
$\overline{a}\meet\overline{b} \leq \overline{b}.$
\item This lower bound is unique up to isomorphism (equality): suppose
that $\overline{d}$ is such that $\overline{d} \leq \overline{a},$
$\overline{d}\leq \overline{b}$ and
$\overline{a} \meet \overline{b} \leq \overline{d}.$
Then
\[ \overline{d} = \overline{a}\,\overline{d}
  = \overline{a}\,\overline{b}\,\overline{d}
  = \overline{d}\,\overline{\overline{a}\,\overline{b}}
  = \overline{d}\,\overline{\overline{a}\meet \overline{b}}
  = \overline{a}\meet \overline{b}.  \]
\end{itemize}
Finally, since $\overline{1_A} = 1_A,$ $1_A \in E_A.$ Also, given any
$\overline{f}: A\rightarrow A,$ $1_A \overline{f} = \overline{f}$ and
thus $\overline{f} \leq 1_A$ and $1_A$ is the top element of $E_A.$
\end{proof}

\begin{proposition} \label{prop:E_A_disjoint}
For each pair of objects $A$ and $B$ of a restriction category
$\mathbf{X},$  if $A \neq B,$ then
$$E_A \cap E_B = \varnothing.$$
\end{proposition}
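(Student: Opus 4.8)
The plan is to prove the contrapositive: rather than assuming $A \neq B$ and deriving emptiness of the intersection, I would assume that $E_A \cap E_B \neq \varnothing$ and show that this forces $A = B$. So suppose there is some element $e$ lying in both $E_A$ and $E_B$. By the definition of $E_A$, the element $e$ is a restriction $\overline{f_A}$ of some endomorphism $f \colon A \to A$, and hence $e$ is an arrow with domain and codomain equal to $A$; that is, $e \colon A \to A$. Symmetrically, as an element of $E_B$, the same arrow $e$ is a restriction $\overline{g_B}$ of some endomorphism $g \colon B \to B$, so $e \colon B \to B$.

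The key observation is simply that an arrow in a category has a well-defined domain and a well-defined codomain. Since $e$ is a single arrow of $\mathbf{X}$, its domain is unambiguous. From its membership in $E_A$ we read off $\dom(e) = A$, and from its membership in $E_B$ we read off $\dom(e) = B$. Uniqueness of the domain then gives $A = \dom(e) = B$, which is exactly the conclusion $A = B$. Equivalently phrased in the forward direction: if $A \neq B$, then no arrow can simultaneously be an endomorphism on $A$ and an endomorphism on $B$, so the two sets of restriction idempotents cannot share a common element, and $E_A \cap E_B = \varnothing$.

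I do not anticipate any genuine obstacle here; the statement is essentially a bookkeeping fact about the hom-sets of a category, and the only subtlety is to recognize that the elements of $E_A$ are not abstract idempotents but honest arrows of $\mathbf{X}$ carrying their domain and codomain data. The one point worth stating cleanly is that every element of $E_A$ really is an endomorphism on $A$: an arbitrary member has the form $\overline{f_A}$ for some $f \colon A \to A$, and the restriction structure assigns to such an $f$ an arrow $\overline{f_A} \colon A \to A$, so indeed $\dom(\overline{f_A}) = \cod(\overline{f_A}) = A$. Once this is noted, the disjointness is immediate from the well-definedness of domains in $\mathbf{X}$.
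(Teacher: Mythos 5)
Your argument is correct and is exactly the paper's own proof: the paper's one-line argument is that $\overline{f} \in E_A \cap E_B$ forces $A = \dom\left(\overline{f}\right) = B$, which is precisely your observation that elements of $E_A$ are honest arrows of $\mathbf{X}$ with domain $A$, so a common element would have two different domains. Your write-up just spells out the same bookkeeping in more detail.
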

\begin{proof}
If $\overline{f} \in E_A \cap E_B,$ then
$A = \dom \left(\overline{f}\right) = B.$
\end{proof}

We may now give the (functorial) constructions giving an equivalence
between the category of top-heavy locally inductive groupoids and
inverse categories.
\begin{construction}
\label{conts:orderedgroupoid}
Given an inverse category
$\left(\mathbf{X},\circ,\overline{(-)}\right),$
define a groupoid\\
$(\mathcal{G}(\mathbf{X}),\bullet,\leq)$ with the following data:
\begin{itemize}
\item Objects:
  $\displaystyle\mathcal{G}(\mathbf{X})_0
  = \coprod_{A\in \mathbf{X_0}} E_A.$
\item Arrows:
  Every arrow in $\mathcal{G}(\mathbf{X})$ is of the form
  $f : \overline{f_A} \rightarrow \overline{f^\circ_B}$ for each arrow
  $f: A \rightarrow B$ in $\mathbf{X}.$
\begin{itemize}
\item Composition:
  for arrows $f:\overline{f}\rightarrow \overline{f^\circ}$ and
  $g:\overline{g}\rightarrow \overline{g^\circ}$ with
  $\overline{f^\circ}=\overline{g},$ we define their composite
  $g\bullet f:\overline{f}\rightarrow \overline{g^\circ}$ in
  $\mathcal{G}(\mathbf{X})$ to be their composite in $\mathbf{X}.$
  This composite is indeed an arrow, for
  \[ \overline{gf} = \overline{\overline{g}f}
    = \overline{\overline{f^\circ}f}= \overline{f}\]
  and
  \[ \overline{(gf)^\circ} = \overline{f^\circ g^\circ}
    = \overline{\overline{f^\circ} g^\circ}
    = \overline{\overline{g}g^\circ}
    = \overline{g^\circ}. \]
\item Identities:
  For any object $\overline{f}:A\rightarrow A$ in
  $\mathcal{G}(\mathbf{X}),$ define
  $1_{\overline{f}} = \overline{f}$ (which is well-defined since
  $\overline{\overline{f}}=\overline{f}$). The identity then satisfies
  the appropriate axiom: for each
  $g : \overline{g} \rightarrow \overline{g^\circ}$ with
  $\overline{g} = \overline{f}$ and
  $\overline{g^\circ} = \overline{f^\circ},$ we have
  $\overline{f^\circ} g = \overline{g^\circ}g = g$ and
  $g\overline{f} = g\overline{g} = g.$
\item Inverses:
  Given an arrow $f : \overline{f}  \rightarrow \overline{f^\circ},$
  define $f^{-1}: \overline{f^\circ} \rightarrow \overline{f}$ to be
  $f^\circ,$ the unique restricted inverse of $f$ from $\mathbf{X}$'s
  inverse structure. The composites are
  $f f^\circ = \overline{f^\circ} =1_{\overline{f^\circ}}$ and
  $f^\circ f = \overline{f} =1_{\overline{f}}$ as required.
\end{itemize}
\end{itemize}
\end{construction}

\begin{definition} \label{def:top_heavy_locally_inductive_groupoid}
An ordered groupoid is said to be a \emph{locally inductive groupoid}
whenever there is a partition $\{M_i\}_{i\in I}$ of $\mathbf{G}_0$ into
meet-semilattices $M_i$ with the property that any two comparable
objects be in the same meet-semilattice $M_i.$ A locally inductive
groupoid is said to be \emph{top-heavy} whenever each meet-semilattice
$M_i$ admits a top-element $\top_i.$
\end{definition}

\begin{note}
The requirement that any two comparable objects of a locally inductive
groupoid be in the same meet-semilattice corresponds to our intuition
that if the meet $A\meet B$ of two objects $A$ and $B$ exists in $M_i,$
then $A$ and $B,$ both sitting above this meet, should also be elements
of $M_i.$
\end{note}

\begin{definition}
An ordered functor between locally inductive groupoids is said to be
\emph{locally inductive} whenever it preserves all meets that exist.
In particular, a locally inductive functor will preserve empty meets and
thus top elements and there is no requirement to define so-called
``top-heavy locally inductive functors''.
\end{definition}

\begin{notation}
We denote the category of locally inductive groupoids and locally
inductive functors by $\mathbf{liGrpd}$ and the category of top-heavy
locally inductive groupoids and locally inductive functors by
$\mathbf{tliGrpd}.$
\end{notation}

\begin{proposition} \label{prop:G(X)_is_OG}
For each inverse category $\mathbf{X},$ $\mathcal{G}(\mathbf{X})$ is a
top-heavy locally inductive groupoid.
\end{proposition}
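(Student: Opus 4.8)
The plan is to take for granted the groupoid structure already supplied by Construction \ref{conts:orderedgroupoid} and to equip its arrows with the order inherited from the natural partial order on the restriction category $\mathbf{X}$, then verify in turn the four axioms of Definition \ref{def:orderedgroupoid}, followed by local inductivity and top-heaviness. Concretely, for arrows $f,g$ of $\mathcal{G}(\mathbf{X})$ (equivalently, arrows of $\mathbf{X}$) I would declare $f\leq g$ exactly when $f$ and $g$ are parallel in $\mathbf{X}$ and $f=g\overline{f}$. The first task is to confirm this is a partial order: reflexivity is (R.1), while antisymmetry and transitivity rest on the observation recorded in the Note preceding Proposition \ref{prop:2cat_properties}, namely that $f\leq g$ forces $\overline{f}\leq\overline{g}$ and hence $\overline{g}\,\overline{f}=\overline{f}$. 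Thus $f\leq g\leq f$ gives $\overline{f}=\overline{g}$ and so $f=g\overline{f}=g\overline{g}=g$, and $f\leq g\leq h$ gives $f=h\overline{g}\,\overline{f}=h\overline{f}$.

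Next I would check the four ordered-groupoid axioms. For (\ref{def:orderedgroupoid_inversepreserve}), given $f\leq g$ I would compute $f^\circ=(g\overline{f})^\circ=(\overline{f})^\circ g^\circ=\overline{f}\,g^\circ$ using Proposition \ref{prop:inverse_of_restriction}, and then rewrite $\overline{f}\,g^\circ=g^\circ\,\overline{\overline{f}\,g^\circ}=g^\circ\,\overline{f^\circ}$ by axiom (R.4); this is precisely $f^\circ\leq g^\circ$, i.e. $f^{-1}\leq g^{-1}$. Axiom (\ref{def:orderedgroupoid_compositionpreserve}) is immediate from Proposition \ref{prop:2cat_properties}, since composites in $\mathcal{G}(\mathbf{X})$ are composites in $\mathbf{X}$ and the two orders agree. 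For the restriction axiom (\ref{def:orderedgroupoid_restriction}), given $f$ with $\dom_{\mathcal{G}}(f)=\overline{f}$ and an object $\overline{e}\leq\overline{f}$, I would set $[f\,|_*\,\overline{e}]:=f\overline{e}$; using (R.3) one obtains $\overline{f\overline{e}}=\overline{f}\,\overline{e}=\overline{e}$, so this arrow has domain $\overline{e}$ and satisfies $f\overline{e}=f\,\overline{f\overline{e}}\leq f$, while any $h\leq f$ with domain $\overline{e}$ must equal $f\overline{h}=f\overline{e}$, yielding uniqueness. Axiom (\ref{def:orderedgroupoid_corestriction}) then follows from the cited equivalence of conditions (\ref{def:orderedgroupoid_restriction}) and (\ref{def:orderedgroupoid_corestriction}).

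Finally, for this ordered structure to be top-heavy locally inductive I would take the partition $\{E_A\}_{A\in\mathbf{X}_0}$ of the object set $\coprod_{A}E_A$. Propositions \ref{prop:EA_meetsemilattice} and \ref{prop:E_A_disjoint} already give that each $E_A$ is a meet-semilattice with top element $1_A$ and that distinct blocks are disjoint, so it remains only to observe that two comparable objects lie in a common block: $\overline{a}\leq\overline{b}$ requires the product $\overline{b}\,\overline{a}$ to be defined, and hence $\overline{a},\overline{b}\in E_A$ for a single object $A$, which is exactly the condition in Definition \ref{def:top_heavy_locally_inductive_groupoid}.

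I expect the one genuinely delicate point to be axiom (\ref{def:orderedgroupoid_inversepreserve}): order-reversal under $(-)^\circ$ is not formal and relies on combining $(\overline{f})^\circ=\overline{f}$ with the ``swap'' identity (R.4) applied in the correct direction. The remaining verifications are either direct restriction-calculus manipulations or immediate appeals to the propositions already established, so the bulk of the argument is bookkeeping once the order and the computation for (\ref{def:orderedgroupoid_inversepreserve}) are in place.
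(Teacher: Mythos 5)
Your proposal is correct and takes essentially the same route as the paper: the same natural partial order on arrows, the same candidate restriction $[f\,|_*\,\overline{e}] = f\overline{e}$ with the same uniqueness argument, axiom (\ref{def:orderedgroupoid_compositionpreserve}) via Proposition \ref{prop:2cat_properties}, and local inductivity and top-heaviness via Propositions \ref{prop:EA_meetsemilattice} and \ref{prop:E_A_disjoint}. The only departures are cosmetic and all legitimate: your (R.4)-based computation for axiom (\ref{def:orderedgroupoid_inversepreserve}) is shorter than the paper's chain (which inserts $g^\circ g$ and commutes restriction idempotents, reaching the same identity $f^\circ = g^\circ\overline{f^\circ}$), your appeal to the stated equivalence of conditions (\ref{def:orderedgroupoid_restriction}) and (\ref{def:orderedgroupoid_corestriction}) replaces the paper's explicit corestriction verification (a substitution the paper itself sanctions), and you spell out the comparability-of-objects condition for the partition that the paper dismisses as immediate.
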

\begin{proof}
Recall that the partial order on the objects $\overline{f}$ in
$\mathcal{G}(\mathbf{X})$ is that which is induced by the partial order
on the arrows of $\mathbf{X}.$ That is, $\overline{f} \leq \overline{g}$
if and only if $\overline{f} = \overline{g}\overline{\overline{f}} =
\overline{g}\overline{f}.$ We now prove that this partial order gives
$\mathcal{G}(\mathbf{X})$ the structure of an ordered groupoid:
\begin{enumerate}[(i)]
\item Suppose that $f$ and $g$ are arrows in $\mathcal{G}(\mathbf{X})$
with $f \leq g.$ That is, we suppose that $g\overline{f} = f$
(since these are also arrows in $\mathbf{X}).$ Then
\begin{align*}
f^\circ &= (g\overline{f})^\circ
= \overline{f}^\circ g^\circ
= \overline{f} g^\circ
= \overline{f} g^\circ g g^\circ
= g^\circ g \overline{f} g^\circ \\
&= g^\circ g \overline{f}\,\overline{f} g^\circ
= g^\circ g \overline{f}\,\overline{f}^\circ g^\circ
= g^\circ g \overline{f}(g\overline{f})^\circ
= g^\circ  {f} f^\circ \\
&= g^\circ \overline{f^\circ}
\end{align*}
and thus $f^{-1} = f^\circ \leq g^\circ = g^{-1}.$
\item This follows directly from Proposition~\ref{prop:2cat_properties}.
\item Given an arrow
  $\alpha : \overline{\alpha}\rightarrow \overline{\alpha^\circ}$ with
  an object $\overline{e}\leq \overline{\alpha},$ we define the
  restriction $[\alpha |_* \overline{e}]$ of $\alpha$ to $\overline{e}$
  to be $\alpha \overline{e}.$ This is indeed an arrow whose domain is
  $\overline{e}:$ $\overline{\alpha\overline{e}} =
   \overline{\alpha}\,\overline{e}=  \overline{e}.$

Also, $\alpha \overline{\alpha\overline{e}} =
       \alpha \overline{\alpha}\,\overline{e} =
       \alpha\overline{e},$ so that $\alpha\overline{e} \leq \alpha.$

If $\beta \leq \alpha$ is any other arrow with
$\dom(\beta) = \overline{e},$ we have $\alpha\overline{\beta} = \beta$
and $\overline{\beta} = \overline{e},$ so that
$\beta = \alpha\overline{e}$ and thus $[\alpha |_* \overline{e}]$ as
defined is unique.
\item Given an arrow
$\alpha : \overline{\alpha}\rightarrow \overline{\alpha^\circ}$ with an
object $\overline{e}\leq \overline{\alpha^\circ},$ we define the
corestriction $[\overline{e}\,{}_*| \alpha]$ of $\alpha$ to
$\overline{e}$ to be $\overline{e}\alpha .$ This is indeed an arrow
whose codomain is $\overline{e}:$
$\overline{(\overline{e}\alpha)^\circ}
  = \overline{\alpha^\circ \overline{e}}
  = \overline{a^\circ}\,\overline{e}
  = \overline{e}.$

Also,
$ \alpha \overline{\overline{e}\alpha} =
  \alpha \overline{e\alpha} = \alpha(e\alpha)^\circ e\alpha =
  \alpha \alpha^\circ e^\circ e \alpha =
  e^\circ e \alpha \alpha^\circ \alpha =
  \overline{e}\alpha,$ so that $\overline{e}\alpha \leq \alpha.$

If $\beta \leq \alpha$ is any other arrow with
$\cod(\beta) = \overline{e},$ we have $\beta^\circ \leq \alpha^\circ$
(property (i) of ordered groupoids) and thus
$\alpha^\circ\overline{\beta^\circ} = \beta^\circ$ and
$\overline{\beta^\circ} = \overline{e},$ so that
$\beta^\circ = \alpha^\circ \overline{e} = (\overline{e}\alpha)^\circ$
and thus $[\overline{e}\,{}_*| \alpha]$ as defined is unique.
\end{enumerate}
Given the choice of objects for $\mathcal{G}(\mathbf{X}),$ it follows
immediately from Propositions~\ref{prop:EA_meetsemilattice}
and \ref{prop:E_A_disjoint} that $\mathcal{G}(\mathbf{X})$ is a
top-heavy locally inductive groupoid.
\end{proof}
The composition in $\mathcal{G}(\mathbf{X})$ of $f$ and $g$ exists
exactly when $\overline{f}  = \overline{g^\circ}$ and is defined by the
composition in $\mathbf{X}.$ The tensor product in
$\mathcal{G}(\mathbf{X})$ is a natural extension of this composition in
the sense that it exists whenever the meet
$\overline{f} \meet \overline{g^\circ}$ exists. This lemma shows that
this extension is also defined by the composition in $\mathbf{X}.$

\begin{lemma} \label{lem:tensor_is_composition}
If $\mathbf{X}$ is an inverse category, then in
$\mathcal{G}(\mathbf{X})$ the tensor products (when defined) are given
by composition in $\mathbf{X}:$
\[ f \otimes g = f g.\]
\end{lemma}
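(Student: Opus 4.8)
The plan is to unwind the definition of the tensor product (Definition \ref{def:orderedgroupoid_tensor}) and evaluate each of its two factors using the explicit formulas for restriction and corestriction established in the proof of Proposition \ref{prop:G(X)_is_OG}, and then to simplify the resulting composite in $\mathbf{X}$ using the restriction axioms (R.1) and (R.2) together with the inverse identity $g g^\circ g = g.$

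First I would pin down when the tensor is defined and what the relevant objects are. For $f \otimes g$ to exist we need the meet $\dom(f) \meet \cod(g) = \overline{f} \meet \overline{g^\circ}$ to exist in $\mathcal{G}(\mathbf{X})_0.$ Since distinct $E_A$ are disjoint (Proposition \ref{prop:E_A_disjoint}), this forces $\overline{f}$ and $\overline{g^\circ}$ to lie in a common meet-semilattice $E_A$; writing $f\colon A \to B$ this means $g$ has codomain $A,$ say $g\colon C \to A,$ so that $fg\colon C\to B$ is a genuine composite in $\mathbf{X}.$ Under these hypotheses Proposition \ref{prop:EA_meetsemilattice} computes the meet explicitly as $m := \overline{f}\meet\overline{g^\circ} = \overline{f}\,\overline{g^\circ},$ which also shows $m \leq \overline{f} = \dom(f)$ and $m \leq \overline{g^\circ} = \cod(g),$ so that both the restriction $[f\,|_*\,m]$ and the corestriction $[m\,{}_*|\,g]$ are defined.

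Next I would evaluate the two factors. The restriction and corestriction formulas from the proof of Proposition \ref{prop:G(X)_is_OG} give $[f\,|_*\,m] = f m = f\,\overline{f}\,\overline{g^\circ} = f\,\overline{g^\circ}$ (using (R.1)) and $[m\,{}_*|\,g] = m g = \overline{f}\,\overline{g^\circ}\,g = \overline{f}\,g$ (using $\overline{g^\circ}g = g g^\circ g = g$). Since composition in $\mathcal{G}(\mathbf{X})$ is that of $\mathbf{X},$ the tensor product is the composite of these two factors, and so $f \otimes g = (f\,\overline{g^\circ})(\overline{f}\,g) = f\,\overline{g^\circ}\,\overline{f}\,g.$

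Finally I would simplify this expression to $fg.$ The idempotents $\overline{f}$ and $\overline{g^\circ}$ both lie on the object $A,$ hence commute by (R.2); combining this with (R.1) and the inverse identity yields $f\,\overline{g^\circ}\,\overline{f}\,g = f\,\overline{f}\,\overline{g^\circ}\,g = f\,\overline{g^\circ}\,g = f\,g g^\circ g = fg,$ as required. I do not expect a genuine obstacle: the only points demanding care are the bookkeeping of the composition order in the tensor and the identification of which semilattice each idempotent inhabits, after which the claim follows from a short restriction-algebra computation.
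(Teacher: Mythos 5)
Your proof is correct and takes essentially the same route as the paper's: unwind Definition \ref{def:orderedgroupoid_tensor}, substitute the explicit restriction/corestriction formulas $[\alpha\,|_*\,\overline{e}] = \alpha\overline{e}$ and $[\overline{e}\,{}_*|\,\alpha] = \overline{e}\alpha$ from the proof of Proposition \ref{prop:G(X)_is_OG}, and reduce $f\,\overline{f}\,\overline{g^\circ}\,\overline{f}\,\overline{g^\circ}\,g$ to $fg$ by restriction algebra (you merely simplify the two factors before composing and invoke (R.2), where the paper composes first and uses idempotency of $\overline{f}\,\overline{g^\circ}$). Your opening observation that existence of the meet forces $\overline{f}$ and $\overline{g^\circ}$ into a common $E_A$, so that $fg$ is genuinely defined in $\mathbf{X}$, is a point the paper leaves implicit, but it does not change the argument.
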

\begin{proof} Recall that, for any arrow $f$ in $\mathbf{X},$
$\dom(f) = \overline{f}$ and $\cod(f) = \overline{f^\circ}.$
Then
\begin{align*}
f \otimes g
&=
  \left[f\,|_*\, \dom(f)\meet \cod(g)\right]
  \left[\dom(f)\meet \cod(g)\,{}_*|\, g\right] \\
&=
  \left[f \,|_*\, \overline{f} \meet \overline{g^\circ}\right]
  \left[\overline{f} \meet \overline{g^\circ} \,{}_*|\, g\right] \\
&=
  \left[f \,|_*\, \overline{f}\,\overline{g^\circ}\right]
  \left[\overline{f}\,\overline{g^\circ} \,{}_*|\, g\right]
= f \overline{f}\,\overline{g^\circ}\,\overline{f}\,\overline{g^\circ} g
= f \overline{f}\,\overline{g^\circ} g
= f g
\end{align*}
\end{proof}

\begin{proposition} \label{prop:lifunctor_preserve_tensor}
Locally inductive functors preserve tensors.
\end{proposition}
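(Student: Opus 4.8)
The plan is to observe that the tensor product
\[
\alpha \otimes \beta = [\alpha \,|_*\, \dom(\alpha)\meet\cod(\beta)][\dom(\alpha)\meet\cod(\beta)\,{}_*|\,\beta]
\]
is assembled from exactly four operations: the meet of objects, restriction, corestriction, and composition. A locally inductive functor $F$ preserves composition (being a functor), preserves all meets that exist (being locally inductive), and preserves the order on arrows (being an ordered functor). It therefore suffices to check that $F$ also preserves restrictions and corestrictions, and then to paste these facts together.

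First I would establish the auxiliary fact that \emph{any} ordered functor $F\colon\mathbf{G}\to\mathbf{H}$ preserves restrictions, i.e.\ $F([f\,|_*\,A]) = [F(f)\,|_*\,F(A)]$ whenever $A\leq\dom(f)$. The target restriction is defined because the order on objects is the order on their identity arrows, so $A\leq\dom(f)$ gives $1_A\leq 1_{\dom(f)}$, whence $F(1_A)\leq F(1_{\dom(f)})$ and thus $F(A)\leq F(\dom(f))$. Now set $g = [f\,|_*\,A]$, so that $\dom(g)=A$ and $g\leq f$. Applying $F$ and using that functors preserve domains while ordered functors preserve order yields $\dom(F(g)) = F(A)$ and $F(g)\leq F(f)$. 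By the uniqueness clause of condition (\ref{def:orderedgroupoid_restriction}) in Definition \ref{def:orderedgroupoid}, $F(g)$ must equal $[F(f)\,|_*\,F(A)]$. The dual argument gives $F([B\,{}_*|\,f]) = [F(B)\,{}_*|\,F(f)]$.

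With these in hand I would compute directly. Write $m = \dom(\alpha)\meet\cod(\beta)$, which exists by hypothesis. Since $F$ is locally inductive, $F(m) = F(\dom(\alpha))\meet F(\cod(\beta)) = \dom(F(\alpha))\meet\cod(F(\beta))$; in particular this meet exists, so $F(\alpha)\otimes F(\beta)$ is defined. Then
\[
F(\alpha\otimes\beta) = F\big([\alpha\,|_*\,m]\big)\,F\big([m\,{}_*|\,\beta]\big) = [F(\alpha)\,|_*\,F(m)][F(m)\,{}_*|\,F(\beta)] = F(\alpha)\otimes F(\beta),
\]
using functoriality for the first equality, the auxiliary fact for the second, and the identification of $F(m)$ for the last.

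The only real subtlety — the step I expect to carry the weight of the argument — is the preservation of restrictions, and within it the verification that $[F(f)\,|_*\,F(A)]$ is even defined on the target side (which rests on $F$ respecting the order on objects). Everything else is bookkeeping with functoriality together with the meet-preservation built into the definition of a locally inductive functor.
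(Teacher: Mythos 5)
Your proof is correct and takes essentially the same route as the paper: the paper's own proof reduces the claim to meet-preservation (from local inductivity) plus the fact that ordered functors preserve restrictions and corestrictions, citing the latter from Lawson \cite{lawson98}, whereas you prove that fact directly via the uniqueness clause of condition (\ref{def:orderedgroupoid_restriction}) in Definition \ref{def:orderedgroupoid}. Your self-contained version, including the check that $F(\alpha)\otimes F(\beta)$ is defined, is sound and simply fills in what the paper delegates to a citation.
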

\begin{proof}
This follows immediately from the definition of a locally inductive
functor and the fact that any ordered functor preserves restrictions and
corestrictions \cite[Proposition 4.1.2(1)]{lawson98}.
\end{proof}

\begin{proposition}
\label{prop:groupoidfunctoriality}
For each functor $F:\mathbf{X}\rightarrow \mathbf{Y}$ between inverse
categories, there exists a locally inductive functor
$\mathcal{G}(F):\mathcal{G}(\mathbf{X})\rightarrow
 \mathcal{G}(\mathbf{Y}).$
\end{proposition}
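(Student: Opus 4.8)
The plan is to define $\mathcal{G}(F)$ directly from $F$ on both objects and arrows, and then to verify in turn that it is well-defined, functorial, order-preserving, and meet-preserving. Since every functor between inverse categories is automatically a restriction functor preserving restricted isomorphisms (as noted after Lemma \ref{lem:restriction_functor_props}), I may freely use the identities $F(\overline{f}) = \overline{F(f)}$ and $F(f^\circ) = F(f)^\circ$ throughout; these are the only structural properties of $F$ that the argument needs.

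On objects, recall $\mathcal{G}(\mathbf{X})_0 = \coprod_{A} E_A$; for a restriction idempotent $\overline{f} \in E_A$ I would set $\mathcal{G}(F)(\overline{f}) = F(\overline{f}) = \overline{F(f)}$, which lies in $E_{F(A)} \subseteq \mathcal{G}(\mathbf{Y})_0$. On arrows, for $f : \overline{f} \to \overline{f^\circ}$ I would set $\mathcal{G}(F)(f) = F(f)$, which in $\mathcal{G}(\mathbf{Y})$ is the arrow $F(f) : \overline{F(f)} \to \overline{F(f)^\circ}$. Well-definedness (that source and target are respected) is then immediate, since $\mathcal{G}(F)$ sends $\dom(f) = \overline{f}$ to $\overline{F(f)} = \dom(F(f))$ and $\cod(f) = \overline{f^\circ}$ to $F(\overline{f^\circ}) = \overline{F(f^\circ)} = \overline{F(f)^\circ} = \cod(F(f))$. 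Functoriality follows because composition in each of $\mathcal{G}(\mathbf{X})$ and $\mathcal{G}(\mathbf{Y})$ is just composition in the underlying inverse category, while the identities are the restriction idempotents, both of which $F$ preserves as a functor.

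It remains to verify the two order conditions, and here I expect the only point of care. For the ordered-functor property, if $f \leq g$ in $\mathcal{G}(\mathbf{X})$, i.e. $f = g\overline{f}$ in $\mathbf{X}$, then $F(f) = F(g)F(\overline{f}) = F(g)\overline{F(f)}$, so $F(f) \leq F(g)$ and the order on arrows is preserved. For local inductiveness I must show $\mathcal{G}(F)$ preserves every meet that exists: two objects admit a meet precisely when they lie in a common $E_A$, where by Proposition \ref{prop:EA_meetsemilattice} the meet is $\overline{a} \meet \overline{b} = \overline{a}\,\overline{b}$, and applying $F$ gives $F(\overline{a}\,\overline{b}) = \overline{F(a)}\,\overline{F(b)} = \overline{F(a)} \meet \overline{F(b)}$ in $E_{F(A)}$, which is exactly $\mathcal{G}(F)(\overline{a}) \meet \mathcal{G}(F)(\overline{b})$; the empty meet is likewise preserved, since the top $1_A$ of $E_A$ is sent to $F(1_A) = 1_{F(A)}$, the top of $E_{F(A)}$. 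The one subtlety — rather than a genuine obstacle — is that one must observe that $\mathcal{G}(F)$ respects the partition, sending $E_A$ into $E_{F(A)}$, so that meets across distinct components (which do not exist in $\mathcal{G}(\mathbf{X})$ anyway) never need to be preserved. With this, $\mathcal{G}(F)$ is a locally inductive functor.
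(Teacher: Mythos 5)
Your proposal is correct and follows essentially the same route as the paper's own proof: define $\mathcal{G}(F)$ by applying $F$ directly, check well-definedness via $F(f^\circ) = F(f)^\circ$, and verify the ordered and meet-preserving conditions using $F(\overline{f}) = \overline{F(f)}$. If anything, yours is slightly tidier: the paper's order-preservation step contains a typo (it writes $g\overline{f} = \overline{f}$ where $g\overline{f} = f$ is meant), which you state correctly, and you additionally make explicit the preservation of empty meets (top elements), which the paper leaves implicit.
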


\begin{proof}
We claim that $F:\mathbf{X}\rightarrow \mathbf{Y}$ induces a locally
inductive functor $\mathcal{G}(F)$ between the groupoids
$\mathcal{G}(\mathbf{X})$ and $\mathcal{G}(\mathbf{Y}).$ Since $F$ is a
functor of inverse categories, we have, for each $\overline{f}$ in
$\mathbf{X},$ that $F\overline{f} = \overline{F(f)}$ is a restriction
idempotent in $\mathbf{Y}.$ We can then define, for any object
$\overline{f}$ in $\mathcal{G}(\mathbf{X}),$
$\mathcal{G}(F)(\overline{f}) = F\overline{f}$ and this is a
well-defined object function.

Given an arrow $f:\overline{f}\rightarrow \overline{f^\circ}$ in
$\mathcal{G}(\mathbf{X}),$ we define
\[ \mathcal{G}(F)(f) :=
      \left[ F(f): F\left(\overline{f}\right) \rightarrow
        F\left(\overline{f^\circ}\right) \right] =
      \left[  F(f) : \overline{F(f)} \rightarrow
        \overline{F(f^\circ)} \right]. \]
We check that this is indeed an arrow in $\mathcal{G}(\mathbf{Y}).$
Clearly, $F(f)$ has the correct domain. We check, then, that it has the
correct codomain; that is, we verify that
$\overline{(F(f))^\circ} = \overline{F(f^\circ)}.$ By Lemma
~\ref{lem:restriction_functor_props}
~(\ref{lem:restriction_functor_props_isos}),
$(F(f))^\circ = F(f^\circ).$ It follows, then, that
$\overline{(F(f))^\circ} = \overline{F(f^\circ)}$ and thus $F$ is well
defined on arrows.

Since the objects of $\mathcal{G}(\mathbf{X})$ are specific arrows in
$\mathbf{X}$ and the composition in $\mathcal{G}(\mathbf{X})$ is, when
defined, given by composition in $\mathbf{X},$ the functoriality of
$\mathcal{G}(F)$ follows from the functoriality of $F.$

We check now that $F$ is an ordered functor. That is, we must check that
$F$ preserves partial orders. Suppose that $f\leq g$ are arrows in
$\mathcal{G}(\mathbf{X}).$ Then $g\overline{f} = f$ and thus
\[F(g)\overline{F(f)} = F(g)F(\overline{f}) = F(g\overline{f}) = F(f).\]
Therefore, $F(f)\leq F(g)$ in $\mathcal{G}(\mathbf{Y})$ and $F$ is an
ordered functor.

Finally, we verify that $F$ is a locally inductive functor. If
$\overline{a} \meet \overline{b}$ exists in $\mathcal{G}(\mathbf{X}),$
then $\overline{a}$ and $\overline{b}$ are endomorphisms on the same
object and are thus composable and in the same meet-semilattice. Then,
by the functoriality of $F,$
$F\left(\overline{a}\meet \overline{b}\right)
  = F\left(\overline{a}\,\overline{b}\right)
  = F(\overline{a})F\left(\overline{b}\right)
  = F(\overline{a})\meet F\left(\overline{b}\right).$
\end{proof}

\begin{corollary} \label{cor:groupoid_functor}
Construction~\ref{conts:orderedgroupoid} is the object function of a
fully faithful functor
$\mathcal{G}:\mathbf{iCat}\rightarrow \mathbf{tliGrpd}.$
\end{corollary}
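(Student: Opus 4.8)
The plan is to assemble the corollary from the three preceding results together with two further checks: that $\mathcal{G}$ respects identities and composition of functors, and that it is bijective on hom-sets. Propositions \ref{prop:G(X)_is_OG} and \ref{prop:groupoidfunctoriality} already guarantee that $\mathcal{G}$ sends each inverse category to an object of $\mathbf{TLIGrpd}$ and each functor to a locally inductive functor, so only these two points remain. Functoriality should be immediate: by construction $\mathcal{G}(F)$ acts as $F$ on the arrows of $\mathcal{G}(\mathbf{X})$ (which are exactly the arrows of $\mathbf{X}$) and as $\overline{f}\mapsto \overline{F(f)}$ on objects, so $\mathcal{G}(1_{\mathbf{X}})$ is the identity and $\mathcal{G}(G\circ F)=\mathcal{G}(G)\circ\mathcal{G}(F)$ follow directly from the corresponding identities for $F$ and $G$. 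For faithfulness I would observe that the arrow function of $\mathcal{G}(F)$ literally is the arrow function of $F$; since a functor of inverse categories is determined by its action on arrows (objects being recovered as identities), $\mathcal{G}(F)=\mathcal{G}(F')$ forces $F=F'$.

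Fullness carries the real content, and is the step I expect to be the main obstacle. Given a locally inductive functor $\Phi\colon\mathcal{G}(\mathbf{X})\to\mathcal{G}(\mathbf{Y})$, I would reconstruct $F\colon\mathbf{X}\to\mathbf{Y}$ as follows. On objects, use that $1_A$ is the top element of the block $E_A$ of the partition of $\mathcal{G}(\mathbf{X})_0$; since $\Phi$ preserves empty meets and hence top elements, $\Phi(1_A)$ is the top of some block $E_B$, that is $\Phi(1_A)=1_B$, and I set $F(A):=B$. On arrows, set $F(f):=\Phi(f)$, viewing $f$, an arrow of $\mathbf{X}$, as an arrow of $\mathcal{G}(\mathbf{X})$. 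Because $\Phi$ preserves order and $\overline{f}\leq 1_A$, $\overline{f^\circ}\leq 1_B$, the images $\Phi(\overline{f})$ and $\Phi(\overline{f^\circ})$ lie in $E_{F(A)}$ and $E_{F(B)}$ respectively, so $F(f)\colon F(A)\to F(B)$ has the correct domain and codomain.

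The crux is verifying that $F$ preserves composition, and here I would lean on Lemma \ref{lem:tensor_is_composition}. For composable $f\colon A\to B$ and $g\colon B\to C$ in $\mathbf{X}$ the meet $\overline{g}\meet\overline{f^\circ}$ exists (both idempotents live on the object $B$), so $g\circ f=g\otimes f$ in $\mathcal{G}(\mathbf{X})$. Since $\Phi$ preserves tensors (Proposition \ref{prop:lifunctor_preserve_tensor}), $F(g\circ f)=\Phi(g\otimes f)=\Phi(g)\otimes\Phi(f)$, and applying Lemma \ref{lem:tensor_is_composition} once more in $\mathcal{G}(\mathbf{Y})$ yields $\Phi(g)\otimes\Phi(f)=\Phi(g)\circ\Phi(f)=F(g)\circ F(f)$; preservation of identities follows since $\Phi$ sends the identity arrow on $1_A$ to that on $1_{F(A)}$. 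As functors between inverse categories are automatically restriction functors preserving restricted isomorphisms, $F$ is a morphism of $\mathbf{ICat}$. Finally I would confirm $\mathcal{G}(F)=\Phi$ by comparing the two on arrows (both equal $\Phi$) and on objects (both send $\overline{h}$ to $\Phi(\overline{h})$). The delicate point throughout is to keep straight the two roles of a restriction idempotent—as an object of $\mathcal{G}$ and as an identity arrow—and to apply the tensor/composition translation on the correct side.
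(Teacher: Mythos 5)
Your proposal is correct and follows essentially the same route as the paper's proof: faithfulness because $\mathcal{G}(F)$ has the same arrow function as $F$, and fullness by reconstructing the preimage functor arrow-by-arrow and transporting composition through Lemma \ref{lem:tensor_is_composition} together with Proposition \ref{prop:lifunctor_preserve_tensor}. The only cosmetic difference is the object map in the fullness argument: you define $F(A)$ via preservation of tops (empty meets), $\Phi(1_A)=1_B$, whereas the paper uses preservation of binary meets to conclude $\Phi(E_A)\subseteq E_B$; both are sanctioned by the paper's definition of locally inductive functor and yield the same assignment.
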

\begin{proof}
By the proof of Proposition~\ref{prop:groupoidfunctoriality},
$\mathcal{G}$ is clearly a faithful functor.

Let $\mathbf{X}$ and $\mathbf{X'}$ be inverse categories and suppose
that $F: \mathcal{G}(\mathbf{X})\rightarrow \mathcal{G}(\mathbf{X'})$ is
a locally inductive functor. We seek, then, a functor
$F' : \mathbf{X}\rightarrow \mathbf{X'}$ with $\mathcal{G}(F') = F.$

For any two restriction idempotents $\overline{e}$ and $\overline{f}$ in
$E_A,$ we have $F(\overline{e} \meet \overline{f}) =
F\overline{e} \meet F\overline{f}$ since $F$ is locally inductive. This
implies that $F\overline{e}$ and $F\overline{f}$ are
$\mathbf{X'}$-endomorphisms on the same object and thus
$F(E_A) \subseteq E_B$ for some object $B\in \mathbf{X'}.$ So we can
define, for each object $A\in \mathbf{X},$ $F'(A)$ to be the object in
$\mathbf{X'}$ satisfying
$F(E_A) \subseteq E_{F'(A)}$ in $\mathcal{G}(\mathbf{X'}).$

Given any arrow $f: A\rightarrow B$ in $\mathbf{X},$ we must define an
arrow $F'(f) : F'(A) \rightarrow F'(B)$ in $\mathbf{X'}.$ We know that
$f$ corresponds to the arrow
$f: \overline{f} \rightarrow \overline{f^\circ}$ in
$\mathcal{G}(\mathbf{X}),$ whose image under $F$ is
$F(f) : F\overline{f} \rightarrow F\overline{f^\circ}$ in
$\mathcal{G}(\mathbf{X'}).$ Since $F\overline{f} \in F(E_A)$ and
$F\overline{f^\circ} \in F(E_B),$ this $F(f)$ corresponds to an arrow
$F'(f) : F'(A) \rightarrow F'(B)$ in $\mathbf{X'}.$

Clearly, identity arrows in $\mathbf{X},$ corresponding to identity
arrows in $\mathcal{G}(\mathbf{X})$ and mapped to identities in
$\mathcal{G}(\mathbf{X'})$ under $F,$ will be mapped to identities in
$\mathbf{X'}$ under $F'.$ We check that composition is preserved.
Suppose that $f$ and $g$ are arrows whose composite $gf$ exists in
$\mathbf{X}.$ Both $g$ and $f$ correspond, then, to arrows
$g:\overline{g}\rightarrow \overline{g^\circ}$ and
$f : \overline{f} \rightarrow \overline{f^\circ},$ respectively, in
$\mathcal{G}(\mathbf{X}).$ Notice that the composite $gf$ does not
necessarily exist (as an arrow) in $\mathcal{G}(\mathbf{X}),$ but that,
since $\overline{g},\overline{f^\circ}\in E_B,$ the tensor $g\otimes f$
does and that this tensor product uniquely corresponds to $gf$ by
Proposition~\ref{lem:tensor_is_composition}.
By Proposition~\ref{prop:lifunctor_preserve_tensor} (since $F$ preserves
meets), then, $F(g\otimes f) = F(g)\otimes F(f)$ and, again by
Lemma~\ref{lem:tensor_is_composition} and the definition of $F',$
corresponds to $F'(g) F'(f).$
\end{proof}

\begin{construction}
\label{const:inverserestriction}
Given a top-heavy locally inductive groupoid\\
$\left(\mathbf{G},\bullet,\leq, \{M_i\}_{i\in I}\right),$ define an
inverse category
$\left(\mathcal{I}(\mathbf{G}), \circ, \overline{(-)} \right)$ with the
following data:
\begin{itemize}
\item Objects: The objects are the meet-semilattices $M_i.$
\item Arrows: $\mathcal{I}(\mathbf{G})(M_1,M_2) =
  \{ f: A_1 \rightarrow A_2 \mbox{ in } \mathbf{G} \,|\,
     A_1\in M_1,\, A_2 \in M_2 \}.$
  Note that every object of $\mathbf{G}$ is in some $M_i,$ and the
  $M_i$ are disjoint, so that every arrow in $\mathbf{G}$ will be found
  in exactly one of these hom-sets.
\begin{itemize}
\item Composition: A composable pair of arrows $f: M_1 \rightarrow M_2$
and $g: M_2 \rightarrow M_3$ in $\mathcal{I}(\mathbf{G}),$ corresponds
to a pair of arrows $f: A_1 \rightarrow A_2$ and
$g: A'_2 \rightarrow A_3$ in $\mathbf{G}$ with
$A_1 \in M_1,$ $A_2, A'_2 \in M_2$ and $A_3 \in M_3.$ Since $M_2$ is a
meet-semilattice, the meet $A_2 \meet A'_2$ exists. We can therefore
define the composite of $f$ with $g$ as
$g\circ f = g\otimes f
          = [g\, |_*\, A_2\meet A'_2][A_2\meet A'_2\, {}_*|\, f].$
This composition is associative by
Proposition~\ref{prop:associative_tensor}.
\begin{figure}[ht]
  \centering
  \includegraphics[scale=1]{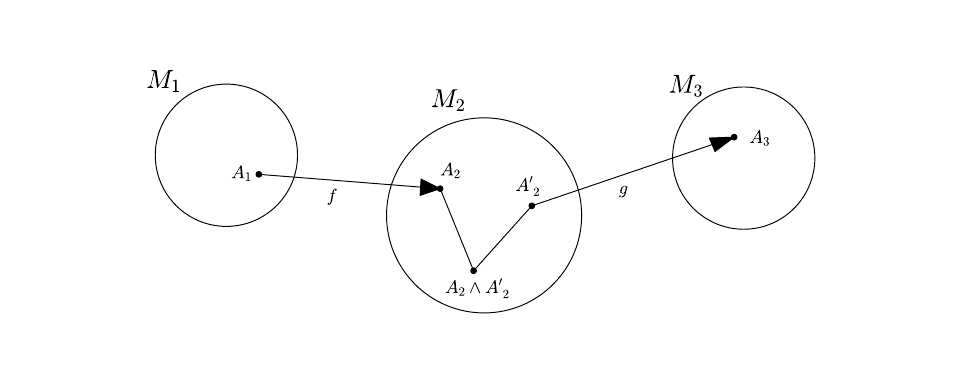}
\end{figure}
\item Identities: For each object $M_1,$ define
$1_{M_1}: M_1 \rightarrow M_1$ to be
$1_{\top_1} = \top_1 \rightarrow \top_1$ in $\mathbf{G}.$
Let $f: M_1 \rightarrow M_2$ be an arrow corresponding to
$f:A_1 \rightarrow A_2$ in $\mathbf{G}.$ Note that
$[1_{\top_1} \,|_*\, A_1 \meet \top_1] = 1_{A_1}$ by
Proposition~\ref{prop:identity_restrictions}. Then
\[
  f \circ 1_{\top_1} =
  \left[ f \,|_*\, A_1 \meet \top_1 \right] \bullet
    \left[ A_1 \meet \top_1 \,{}_*|\, 1_{\top_1} \right] =
  \left[ f \,|_*\, A_1 \right] \bullet 1_{A_1} = f. \]
Similarly, $1_{\top_2}\circ f = f.$
\begin{figure}[H]
  \centering
  \includegraphics[scale=1]{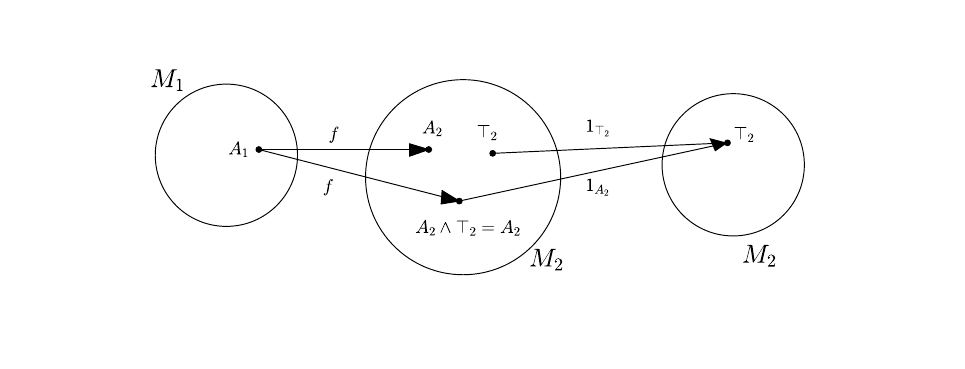}
\end{figure}
\item Restrictions: Given an arrow $f: M_1 \rightarrow M_2$
corresponding to an arrow $f: A_1\rightarrow A_2$ in $\mathbf{G},$
define
$\overline{f}: M_1 \rightarrow M_1$ by
$\overline{f} = 1_{A_1} : A_1 \rightarrow A_1.$
Conditions (R.1) -- (R.4) saying that $\mathcal{I}(\mathbf{G})$ is a
restriction category follow readily from the fact that all restriction
idempotents are identities on some object in $\mathbf{G}$ and that
restrictions in an ordered groupoid are unique.
\item Partial Isomorphisms: For each arrow $f: M_1 \rightarrow M_2,$
define $f^\circ: M_2 \rightarrow M_1$ as $f^{-1} : A_2 \rightarrow A_1.$
To check that this is a restricted inverse, we check the required
composites. First,
\[
f\circ f^\circ = f\otimes f^\circ =
[f \,|_*\, A_1\meet A_1] \bullet [A_1\meet A_1 \,{}_*|\, f^{-1}] =
f\bullet f^{-1} = 1_{A_2} = \overline{f^{-1}}. \]
Similarly, $f^\circ \circ f = \overline{f}.$
\end{itemize}
\end{itemize}
\end{construction}

\begin{proposition}
\label{prop:inverserestrictionfunctoriality}
For each locally inductive functor $F:\mathbf{G}\rightarrow \mathbf{H},$
there exists a functor
$\mathcal{I}(F):\mathcal{I}(\mathbf{G})\rightarrow
  \mathcal{I}(\mathbf{H}).$
\end{proposition}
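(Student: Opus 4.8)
The plan is to define $\mathcal{I}(F)$ on objects and arrows, verify it is a well-defined functor, and confirm it is a restriction functor (hence automatically a functor of inverse categories by the Note following Lemma~\ref{lem:restriction_functor_props}). First I would handle objects: an object of $\mathcal{I}(\mathbf{G})$ is a meet-semilattice $M_i$ in the partition of $\mathbf{G}_0$. Since $F$ is locally inductive, it preserves all existing meets, so it maps comparable objects to comparable objects and preserves the partition structure; in particular $F$ sends each $M_i$ into a single meet-semilattice $N_j$ of $\mathbf{H}_0$ (because any two objects of $M_i$ share a meet, whose images must share a meet in $\mathbf{H}$, forcing them into the same block $N_j$). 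I would thus define $\mathcal{I}(F)(M_i)$ to be that block $N_j$. On arrows, an arrow $f\colon M_1\to M_2$ in $\mathcal{I}(\mathbf{G})$ is literally an arrow $f\colon A_1\to A_2$ of $\mathbf{G}$ with $A_1\in M_1$, $A_2\in M_2$; I would set $\mathcal{I}(F)(f) = F(f)\colon F(A_1)\to F(A_2)$, which lands in the correct hom-set $\mathcal{I}(\mathbf{H})(N_1,N_2)$ by the object assignment.

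Next I would check functoriality. Identities in $\mathcal{I}(\mathbf{G})$ are the top-element identities $1_{\top_i}$; since $F$ is locally inductive it preserves empty meets and hence top elements, so $F(\top_i)=\top'_{j}$ and $F(1_{\top_i}) = 1_{F(\top_i)} = 1_{\top'_j}$, which is exactly the identity $1_{N_j}$ in $\mathcal{I}(\mathbf{H})$. For composition, the key observation is that composition in $\mathcal{I}(\mathbf{G})$ is given by the tensor product in $\mathbf{G}$ (Construction~\ref{const:inverserestriction}), namely $g\circ f = g\otimes f$. By Proposition~\ref{prop:lifunctor_preserve_tensor}, locally inductive functors preserve tensors, so
\[ \mathcal{I}(F)(g\circ f) = F(g\otimes f) = F(g)\otimes F(f) = \mathcal{I}(F)(g)\circ \mathcal{I}(F)(f), \]
giving functoriality directly.

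Finally, I would confirm that $\mathcal{I}(F)$ is a restriction functor, i.e. $\mathcal{I}(F)(\overline{f}) = \overline{\mathcal{I}(F)(f)}$. Here $\overline{f} = 1_{A_1}$ for $f\colon A_1\to A_2$, so $\mathcal{I}(F)(\overline{f}) = F(1_{A_1}) = 1_{F(A_1)}$; on the other hand $\overline{\mathcal{I}(F)(f)} = \overline{F(f)} = 1_{\dom F(f)} = 1_{F(A_1)}$, and the two agree. Once this is verified, the Note after Lemma~\ref{lem:restriction_functor_props} guarantees $\mathcal{I}(F)$ preserves the inverse structure as well, so it is a genuine morphism in $\mathbf{ICat}$.

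I expect the main obstacle to be the object assignment, specifically the verification that $F$ sends an entire block $M_i$ into a single block $N_j$ rather than scattering it across several. This is where the full strength of the locally-inductive hypothesis (preservation of all existing meets, together with the partition axiom that comparable objects lie in the same block) is essential, and it mirrors the argument already used in the proof of Corollary~\ref{cor:groupoid_functor}; I would make sure to invoke that any two objects $A, A'\in M_i$ admit a meet $A\meet A'$ whose image $F(A\meet A') = F(A)\meet F(A')$ exists, forcing $F(A)$ and $F(A')$ into the same meet-semilattice of $\mathbf{H}$.
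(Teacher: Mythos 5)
Your proposal is correct and takes essentially the same route as the paper: define $\mathcal{I}(F)$ on objects by sending each block $M_i$ to the unique block of $\mathbf{H}_0$ containing $F(M_i)$, and on arrows by $F$ itself. You are in fact more detailed than the paper's own terse proof, which merely asserts that functoriality follows from that of $F$; your explicit appeal to Proposition~\ref{prop:lifunctor_preserve_tensor} for composition (since composition in $\mathcal{I}(\mathbf{G})$ is the tensor, not plain composition in $\mathbf{G}$) and to preservation of empty meets for identities supplies exactly the details the paper leaves implicit.
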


\begin{proof}
We show that $F$ induces a functor
$\mathcal{I}(F): \mathcal{I}(\mathbf{G})\rightarrow
  \mathcal{I}(\mathbf{H}).$

Given any object in $\mathcal{I}(\mathbf{G}),$ a meet-semilattice
$M_1,$ define $\mathcal{I}(F)(M_1)$ to be \emph{the} meet-semilattice
$M'_1$ such that $F(M_1) \subseteq M'_1.$ Note that this assignment of
$M'_1$ to $M_1$ is unique since the $M'_i$ are a partition of
$\mathbf{H}_0.$

For any arrow $f:M_1\rightarrow M_2$ in $\mathcal{I}(\mathbf{G})$
corresponding to $f:A_1\rightarrow A_2$ in $\mathbf{G},$ we define
$\mathcal{I}(F)(f) = F(f) : F(A_1) \rightarrow F(A_2),$ an arrow
$F(f) : F(M_1) \rightarrow F(M_2)$ in $\mathcal{I}(\mathbf{G'}).$ That
this assignment is functorial follows from the functoriality of $F.$
\end{proof}

\begin{corollary} \label{cor:ircat_functor}
Construction~\ref{const:inverserestriction} is the object function of a
functor
\[\mathcal{I}: \mathbf{tliGrpd}\rightarrow \mathbf{iCat}.\]
\end{corollary}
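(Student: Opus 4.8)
The plan is to check that the assignments given by Construction \ref{const:inverserestriction} on objects and by Proposition \ref{prop:inverserestrictionfunctoriality} on morphisms satisfy the two functor axioms: preservation of identity functors and preservation of composites. Everything else---that $\mathcal{I}(\mathbf{G})$ is an inverse category, and that each $\mathcal{I}(F)$ is a genuine functor between inverse categories (hence automatically a restriction functor preserving restricted isomorphisms, by the Note following Lemma \ref{lem:restriction_functor_props})---is already established, so only these two equations remain to be verified.

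For the identity axiom, I would show $\mathcal{I}(1_{\mathbf{G}}) = 1_{\mathcal{I}(\mathbf{G})}$. On objects, $\mathcal{I}(1_{\mathbf{G}})(M_i)$ is by definition the unique meet-semilattice containing $1_{\mathbf{G}}(M_i) = M_i$, which is $M_i$ itself; on arrows, $\mathcal{I}(1_{\mathbf{G}})(f) = 1_{\mathbf{G}}(f) = f$. Hence $\mathcal{I}(1_{\mathbf{G}})$ is the identity functor on $\mathcal{I}(\mathbf{G})$.

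For composition, given composable locally inductive functors $F:\mathbf{G}\to\mathbf{H}$ and $G:\mathbf{H}\to\mathbf{K}$, I would verify $\mathcal{I}(G\circ F) = \mathcal{I}(G)\circ\mathcal{I}(F)$ separately on objects and arrows. On arrows the equality is immediate, since both sides send $f$ to $G(F(f))$. On objects the content is a short chain of containments: if $F(M_i)\subseteq M'_j$ in $\mathbf{H}$ and $G(M'_j)\subseteq M''_k$ in $\mathbf{K}$, then $(G\circ F)(M_i)\subseteq M''_k$, so by uniqueness of the containing block in a partition we get $\mathcal{I}(G\circ F)(M_i) = M''_k = \mathcal{I}(G)(M'_j) = \mathcal{I}(G)(\mathcal{I}(F)(M_i))$.

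The argument is entirely routine; the only point requiring any care is that the object-level assignment is defined implicitly as ``the unique block of the partition containing the image.'' The mild obstacle is therefore just to confirm that this implicit definition behaves well under composition, which is exactly the chain-of-containments observation above and rests on the blocks $\{M_i\}$ forming a genuine partition, so that the containing block is always unique.
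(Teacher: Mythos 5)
Your proposal is correct and follows essentially the same route as the paper's own proof: both verify the two functor axioms directly, with identity preservation being immediate and composition on objects handled by the same chain-of-containments argument, $G(F(M))\subseteq G(M')\subseteq M''$, together with uniqueness of the containing block of the partition. The paper's proof is just a more compressed version of exactly this argument.
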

\begin{proof}
Let
$\xymatrix@1{\mathbf{G} \ar[r]^{F}
  & \mathbf{G'} \ar[r]^{G} & \mathbf{G''}}$
be a composable pair of locally inductive functors. Then, on objects of
$\mathcal{I}(\mathbf{G})$ (meet-semilattices forming the partition of
$\mathbf{G_0}),$
\begin{align*}
\mathcal{I}(G)\mathcal{I}(F)(M)
&= \mathcal{I}(G)(M'), \mbox{ where } M'
   \mbox{ such that } FM \subseteq M' \\
&= M'', \mbox{ where } M''
   \mbox{ such that } M'' \supseteq G(M') = G(FM) = (GF)M \\
&= \mathcal{I}(GF)(M),\mbox{ by the uniqueness of } M'' \supseteq (GF)M.
\end{align*}
Equality of the functors $\mathcal{I}(GF)$ and
$\mathcal{I}(G)\mathcal{I}(F)$ follows immediately.  That $\mathcal{I}$
preserves identity functors follows from the observation that
$\mathcal{I}(1_\mathbf{G})(M) = M$ for all objects $M$ in
$\mathcal{I}(\mathbf{G}).$
\end{proof}

\begin{theorem} \label{thm:ircat_og_equivalence}
The functors $\mathcal{G}$ and $\mathcal{I}$ form an equivalence of
categories,
\[
\xymatrix{
\mathbf{iCat} \ar[r]<+0.3pc>^-{\mathcal{G}}
  & \mathbf{tliGrpd} \ar[l]<+0.3pc>^-{\mathcal{I}}
}
\]
\end{theorem}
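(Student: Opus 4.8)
The plan is to build on the two facts already established: $\mathcal{G}$ is fully faithful (Corollary \ref{cor:groupoid_functor}) and $\mathcal{I}$ is a functor (Corollary \ref{cor:ircat_functor}). It then remains to produce natural isomorphisms $\eta\colon \mathrm{Id}_{\mathbf{ICat}} \Rightarrow \mathcal{I}\mathcal{G}$ and $\varepsilon\colon \mathcal{G}\mathcal{I} \Rightarrow \mathrm{Id}_{\mathbf{TLIGrpd}}$ exhibiting $\mathcal{I}$ as a quasi-inverse of $\mathcal{G}$. Since $\mathcal{G}$ is fully faithful, the construction of $\varepsilon$ alone (essential surjectivity) already forces an equivalence; I nonetheless plan to give both units so that the specific pair $(\mathcal{G},\mathcal{I})$ is seen to form the equivalence.

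First I would unwind $\mathcal{I}\mathcal{G}(\mathbf{X})$. Its objects are the blocks of the partition $\coprod_{A} E_A$ of $\mathcal{G}(\mathbf{X})_0$, namely the semilattices $E_A$, and by Proposition \ref{prop:E_A_disjoint} the assignment $A \mapsto E_A$ is a bijection onto objects; an arrow $E_A \to E_B$ is an arrow $f\colon \overline{f} \to \overline{f^\circ}$ of $\mathcal{G}(\mathbf{X})$ with $\overline{f}\in E_A$ and $\overline{f^\circ}\in E_B$, that is, exactly an arrow $f\colon A \to B$ of $\mathbf{X}$. I would let $\eta_{\mathbf{X}}$ send $A \mapsto E_A$ and act as the identity on arrows. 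Composition in $\mathcal{I}\mathcal{G}(\mathbf{X})$ is by definition the tensor product in $\mathcal{G}(\mathbf{X})$, which is composition in $\mathbf{X}$ by Lemma \ref{lem:tensor_is_composition}, while the restriction $\overline{f}=1_{\overline{f}}$ of $\mathcal{I}\mathcal{G}(\mathbf{X})$ is the restriction idempotent $\overline{f}$ of $\mathbf{X}$ (Construction \ref{conts:orderedgroupoid}); hence $\eta_{\mathbf{X}}$ is an isomorphism of inverse categories, and naturality is immediate because both $\mathcal{G}$ and $\mathcal{I}$ act as $F \mapsto F$ on underlying arrow-assignments.

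Next I would unwind $\mathcal{G}\mathcal{I}(\mathbf{G})$. The restriction idempotents on an object $M_i$ of $\mathcal{I}(\mathbf{G})$ are exactly the identities $1_A\colon A\to A$ with $A\in M_i$, so $E_{M_i}=\{\,1_A \mid A\in M_i\,\}$ is in bijection with $M_i$ via $1_A\leftrightarrow A$, giving $\coprod_i E_{M_i}\cong \mathbf{G}_0$; an arrow of $\mathcal{G}\mathcal{I}(\mathbf{G})$ is an arrow of $\mathcal{I}(\mathbf{G})$, i.e.\ an arrow of $\mathbf{G}$, whose source and target under $1_A\leftrightarrow A$ are $\dom_{\mathbf{G}}(f)$ and $\cod_{\mathbf{G}}(f)$. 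I would take $\varepsilon_{\mathbf{G}}$ to be this identification. Composition in $\mathcal{G}\mathcal{I}(\mathbf{G})$ is defined exactly when $\overline{f^\circ}=\overline{g}$, i.e.\ $\cod_{\mathbf{G}}(f)=\dom_{\mathbf{G}}(g)$, and is then the tensor product of $g$ and $f$ in $\mathbf{G}$ over a trivial meet, hence just $g\bullet f$; identities and inverses match likewise, and Propositions \ref{prop:EA_meetsemilattice} and \ref{prop:E_A_disjoint} show that the partition $\{E_{M_i}\}$ and the tops $1_{\top_i}$ correspond to $\{M_i\}$ and $\top_i$.

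The step I expect to be the main obstacle is verifying that $\varepsilon_{\mathbf{G}}$ preserves and reflects the order, because the order on $\mathcal{G}\mathcal{I}(\mathbf{G})$ is the restriction-category order of $\mathcal{I}(\mathbf{G})$, whose composition is the tensor product rather than the groupoid composition of $\mathbf{G}$. Writing $A_1=\dom_{\mathbf{G}}(f)$ and $B_1=\dom_{\mathbf{G}}(g)$ for parallel arrows $f,g$, the relation $f\leq g$ in $\mathcal{I}(\mathbf{G})$ means $f=g\,\overline{f}=g\otimes 1_{A_1}$, and I would evaluate this tensor using Proposition \ref{prop:identity_restrictions} as
\[ g \otimes 1_{A_1} = \left[g \,|_*\, A_1 \meet B_1\right]\bullet 1_{A_1 \meet B_1} = \left[g \,|_*\, A_1 \meet B_1\right]. \]
Comparing domains forces $A_1=A_1\meet B_1$, i.e.\ $A_1\leq B_1$, whence $f=[g\,|_*\,A_1]$, which is precisely $f\leq g$ in $\mathbf{G}$; the converse is the same computation read backwards. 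This identifies the two orders, so $\varepsilon_{\mathbf{G}}$ is an isomorphism of top-heavy locally inductive groupoids, and its naturality again follows from $\mathcal{G}$ and $\mathcal{I}$ acting as $F\mapsto F$ on arrows. With $\eta$ and $\varepsilon$ natural isomorphisms, $\mathcal{G}$ and $\mathcal{I}$ form the asserted equivalence.
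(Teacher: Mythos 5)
Your proposal is correct, and its core computation coincides with the paper's: both hinge on unwinding $\mathcal{G}\mathcal{I}(\mathbf{G})$ and identifying it with $\mathbf{G}$ (objects via $E_{M_i}\cong M_i$, arrows appearing exactly once, composition collapsing to $g\bullet f$ because the tensor is taken over a trivial meet). The structural difference is in how the equivalence is assembled: the paper invokes Corollary \ref{cor:groupoid_functor} (full faithfulness of $\mathcal{G}$) and then only needs essential surjectivity, which the counit-style isomorphism $\mathcal{G}\mathcal{I}(\mathbf{G})\cong \mathbf{G}$ supplies; you instead exhibit both natural isomorphisms $\mathrm{Id}_{\mathbf{ICat}}\cong\mathcal{I}\mathcal{G}$ and $\mathcal{G}\mathcal{I}\cong\mathrm{Id}_{\mathbf{TLIGrpd}}$, so your argument displays $\mathcal{I}$ as an explicit quasi-inverse and does not logically depend on full faithfulness at all (your unit construction essentially re-runs, via Lemma \ref{lem:tensor_is_composition}, the same computation that gave fullness). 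What your route buys is completeness on a point the paper treats tersely: the paper's proof checks objects, arrows, composition, and restrictions, and declares the isomorphism immediate, whereas you verify explicitly that the restriction-category order on $\mathcal{I}(\mathbf{G})$ (where $f\leq g$ means $f=g\otimes 1_{A_1}$) coincides with the ordered-groupoid order on $\mathbf{G}$, via $g\otimes 1_{A_1}=[g\,|_*\,A_1\meet B_1]$ and comparison of domains -- this is exactly the check needed for $\varepsilon_{\mathbf{G}}$ to be an isomorphism of \emph{ordered} groupoids rather than of mere groupoids. (For the converse direction of that check, note you also need that $f\leq g$ in $\mathbf{G}$ forces $\dom(f)\leq\dom(g)$, which follows from order-preservation of inverses and composition; this is the standard fact the paper uses when saying the order on objects is induced by the order on arrows.) The paper's version is shorter by leaning on the ff + essentially-surjective criterion; yours is more self-contained and makes the adjoint-equivalence data explicit.
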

\begin{proof}
By Corollary~\ref{cor:groupoid_functor}, the functor $\mathcal{G}$ is
fully faithful. We show now that $\mathcal{G}$ is essentially surjective
by demonstrating a natural isomorphism
$\mathcal{G}\mathcal{I} \cong 1_{\mathbf{tliGrpd}}.$

We start with a top-heavy locally inductive groupoid
$\left(\mathbf{G},\bullet,\leq, \{M_i\}_{i\in I}\right)$ and we consider
the composite $\mathcal{G}\mathcal{I}(\mathbf{G}).$ Recall that
$\mathcal{I}(\mathbf{G})$ has as objects the meet-semilattices $M_i$ and
arrows of the form $f:M_1 \rightarrow M_2,$ where
$f:A_1 \rightarrow A_2$ is an arrow in $\mathbf{G}$ with $A_1 \in M_1$
and $A_2 \in M_2.$ Further recall that every arrow in $\mathbf{G}$ is
found exactly once in $\mathcal{I}(\mathbf{G}).$ Note that for each
object $M_i,$
\[
  E_{M_i}
  = \{\overline{f}:M_i\rightarrow M_i | f: M_i \rightarrow M_i \}
  = \{ 1_{A_i} | A_i \in M_i \} \cong M_i.\]
Then the locally inductive groupoid $\mathcal{G}\mathcal{I}(\mathbf{G})$
contains the following data:
\begin{itemize}
\item Objects:
  $\displaystyle \coprod_{i\in I} E_{M_i} \cong  \coprod_{i\in I} M_i
    = \mathbf{G}_0.$
\item Arrows:
  For each $f: M_1 \rightarrow M_2$ in $\mathcal{I}(\mathbf{G})$
  corresponding to $f:A_1 \rightarrow A_2$ in $\mathbf{G},$ there is an
  arrow
  $f : \overline{f}\rightarrow \overline{f^\circ}
  = f : 1_{A_1} \rightarrow 1_{A_2} \cong f:A_1 \rightarrow A_2$ in
  $\mathcal{G}\mathcal{I}(\mathbf{G}).$ Since arrows of $\mathbf{G}$
  are appearing exactly once in $\mathcal{I}(\mathbf{G}),$ we have,
  then, that $(\mathcal{G}\mathcal{I}(\mathbf{G}))_1 \cong\mathbf{G}_1.$
\begin{itemize}
\item Composition:
  Given two composable arrows corresponding to $f:A_1 \rightarrow A_2$
  and $g : A_2 \rightarrow A_3$ in $\mathcal{G}\mathcal{I}(\mathbf{G}),$
  we have in $\mathcal{I}(\mathbf{G})$ that
  \[
  g \circ f = g\otimes f
  = [g\,|_*\, A_2 \meet A_2]\bullet [A_2 \meet A_2 \,{}_*|\, f]
  = g\bullet f.
  \]
  Their composite, then, is
  \[
  g\star f \mbox{ in } \mathcal{G}\mathcal{I}(\mathbf{G})
  = g \circ f  \mbox{ in } \mathcal{I}(\mathbf{G})
  = g\bullet f \mbox{ in } \mathbf{G}.
  \]
  That is, composition in $ \mathcal{G}\mathcal{I}(\mathbf{G}) $ is the
  same as that in $\mathbf{G}$ up to isomorphism.
\item Restrictions: Given an arrow
  $f: 1_{A_1} \rightarrow 1_{A_2} \cong
   f: A_1 \rightarrow A_2$ and $A'_1 \leq A_1,$ we have that
  \begin{align*}
  (f \,|_*\, A'_1) \mbox{ in } \mathcal{G}\mathcal{I}(\mathbf{G})
  &\cong f \circ 1_{A'_1} \mbox{ in } \mathcal{I}(\mathbf{G})
  = f \otimes 1_{A'_1} \mbox{ in } \mathbf{G} \\
  &= [f \,|_*\, A_1 \meet A'_1] \bullet
     [ A_1 \meet A'_1 \,|_*\, 1_{A'_1}]  \\
  &= [f \,|_*\, A'_1] \bullet 1_{A'_1} = [f \,|_*\, A'_1].
  \end{align*}
  That is, the restrictions of the two ordered groupoids $\mathbf{G}$
  and $\mathcal{G}\mathcal{I}\mathbf{G}$ are the same up to isomorphism.
\end{itemize}
\end{itemize}
This description of $\mathcal{G}\mathcal{I}(\mathbf{G})$ is written so
that the isomorphism
$\mathbf{G}\cong\mathcal{G}\mathcal{I}(\mathbf{G})$ follows immediately.
\end{proof}

\begin{note}
In an inverse semigroup $(S,\bullet),$ every idempotent is of the form
$s^\bullet \bullet s$ for some $s\in S.$ In addition, all idempotents
commute. We can then consider the groupoid associated to an inverse
semigroup as the Karoubi envelope of the single-object inverse category
(with unit) associated to $S.$ In a general inverse category, this
fact ensures that every restriction idempotent will appear as an object
in the associated top-heavy locally inductive groupoid, and that every
object in this groupoid is a restriction idempotent.
\end{note}

The definition of the functor $\mathcal{G}$ relies on the top-heavy
property of a locally inductive groupoid $\mathcal{G}$ only when
defining identities on the meet-semilattices partitioning
$\mathbf{G}_0.$ Similarly, the identities of an inverse category
$\mathbf{X}$ are essential only as top elements of the meet-semilattices
$E_A.$ In other words, removing identities from an inverse category is
equivalent to removing top elements from the meet-semilattices
partitioning a locally inductive groupoid. As a result, the equivalence
established in Theorem~\ref{thm:ircat_og_equivalence} generalizes
immediately.

\begin{corollary} \label{cor:irscat_og_equiv}
The functors $\mathcal{G}$ and $\mathcal{I}$ form an equivalence
\[
\xymatrix{
\mathbf{isCat} \ar[r]<+0.3pc>^{\mathcal{G}}
& \mathbf{liGrpd} \ar[l]<+0.3pc>^{\mathcal{I}},
}
\]
where $\mathbf{isCat}$ is the category of inverse semicategories.
\end{corollary}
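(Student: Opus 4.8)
The plan is to run the argument of Theorem~\ref{thm:ircat_og_equivalence} again, but to audit each construction and each functoriality proof for its dependence on categorical identities (on the inverse side) and on the top elements $\top_i$ (on the groupoid side), and to verify that these are the \emph{only} features lost when passing to $\mathbf{ISCat}$ and $\mathbf{LIGrpd}$. First I would fix the dictionary: an inverse semicategory is a restriction semicategory in which every arrow is a restricted isomorphism, obtained from Definition~\ref{def:inverse_category} by dropping the requirement of identity arrows; a locally inductive groupoid is obtained from Definition~\ref{def:top_heavy_locally_inductive_groupoid} by dropping the requirement that each $M_i$ have a top element $\top_i$. The content of the corollary is that these two deletions correspond to each other under $\mathcal{G}$ and $\mathcal{I}$.

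Next I would re-examine Construction~\ref{conts:orderedgroupoid}. The key observation is that the \emph{groupoid} identities of $\mathcal{G}(\mathbf{X})$ are the restriction idempotents $\overline{f}$, which are produced by the restriction structure alone and never require a categorical identity in $\mathbf{X}$; likewise composition, inverses, and the order are all defined by the restriction-category operations. Hence $\mathcal{G}$ applies verbatim to an inverse semicategory. Proposition~\ref{prop:EA_meetsemilattice} shows $E_A$ is a meet-semilattice with meet $\overline{a}\,\overline{b}$; the \emph{only} use of a categorical identity there is the final sentence, which exhibits $1_A$ as the top of $E_A$. Dropping identities therefore deletes exactly the top elements, so by Propositions~\ref{prop:EA_meetsemilattice} and~\ref{prop:E_A_disjoint} (whose disjointness argument is identity-free) $\mathcal{G}(\mathbf{X})$ is a locally inductive groupoid, not necessarily top-heavy. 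Symmetrically, in Construction~\ref{const:inverserestriction} the top elements $\top_i$ are used \emph{only} to define the identities $1_{M_i} = 1_{\top_i}$; composition via the tensor, the restriction $\overline{f} = 1_{A_1}$, and the partial inverses $f^\circ = f^{-1}$ are all defined without reference to $\top_i$. Omitting these identities turns $\mathcal{I}(\mathbf{G})$ into an inverse semicategory whenever $\mathbf{G}$ is merely locally inductive.

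For functoriality, I would check that the proofs of Propositions~\ref{prop:groupoidfunctoriality} and~\ref{prop:inverserestrictionfunctoriality} and of Corollaries~\ref{cor:groupoid_functor} and~\ref{cor:ircat_functor} transfer without change: all of these arguments manipulate restriction idempotents, composites, tensors (Lemma~\ref{lem:tensor_is_composition}, Proposition~\ref{prop:lifunctor_preserve_tensor}) and the partition data, none of which invoke identities or tops. In particular the object assignment $F(E_A) \subseteq E_{F'(A)}$ rests only on preservation of meets, i.e.\ local inductivity, so it makes sense for semifunctors. Full faithfulness of $\mathcal{G}$ then follows exactly as in Corollary~\ref{cor:groupoid_functor}, and essential surjectivity follows by reproducing the natural isomorphism $\mathcal{G}\mathcal{I} \cong 1$ from the proof of Theorem~\ref{thm:ircat_og_equivalence}. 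The one point to recheck there is the computation $E_{M_i} = \{\,1_{A_i} \mid A_i \in M_i\,\} \cong M_i$: the idempotents $1_{A_i}$ are the \emph{groupoid} identities of $\mathbf{G}$, which exist on every object regardless of whether $M_i$ has a top, so the isomorphism persists.

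The main obstacle I expect is precisely this audit: one must be confident that no step silently uses a top element or a categorical identity in an essential way -- for instance, in the well-definedness of $\mathcal{I}(F)$ on objects (choosing the unique $M'_i$ with $F(M_i)\subseteq M'_i$) or in the identification $E_{M_i} \cong M_i$. Once each such step is traced to the partition and meet structure alone, the equivalence is immediate, and no genuinely new computation is required beyond those already carried out for Theorem~\ref{thm:ircat_og_equivalence}.
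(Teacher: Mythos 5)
Your proposal is correct and follows exactly the paper's own reasoning: the paper justifies this corollary by observing that the top elements $\top_i$ enter the constructions only through the identities $1_{M_i} = 1_{\top_1}$, and that the identities of an inverse category serve only as the tops of the $E_A$, so that removing both leaves the equivalence of Theorem~\ref{thm:ircat_og_equivalence} intact. Your audit of Constructions~\ref{conts:orderedgroupoid} and~\ref{const:inverserestriction}, the functoriality proofs, and the isomorphism $E_{M_i}\cong M_i$ (noting the $1_{A_i}$ are groupoid identities, not tops) is just a more detailed execution of that same argument.
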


Since single-object inverse categories are precisely inverse semigroups
with identity, it is clear that single-object inverse semicategories are
precisely inverse semigroups. With inverse semicategories as
multi-object inverse semigroups, we see that Theorem~\ref{thm:ESN} --
the equivalence between inductive groupoids and inverse semigroups --
then follows immediately from Corollary~\ref{cor:irscat_og_equiv}.

We will end this section with a short discussion on a generalization of
Theorem~\ref{thm:ircat_og_equivalence}.

Recall that \textit{prehomomorphisms} of inverse semigroups are
functions between inverse semigroups satisfying
$\phi(ab) \leq \phi(a)\phi(b).$ Theorem~\ref{thm:ESN} can then be
generalized to
\begin{theorem}[\cite{lawson98}, Theorem 8]
The category of inverse semigroups and prehomomorphisms is equivalent to
the category of inductive groupoids and ordered functors.
\end{theorem}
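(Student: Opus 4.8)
The plan is to reuse the object-level correspondence already established in Theorem \ref{thm:ESN} and to concentrate entirely on the morphisms, enlarging the semigroup homomorphisms to prehomomorphisms and the inductive functors to ordered functors. Since the constructions $\mathcal{G}$ and $\mathcal{S}$ assign exactly the same objects as before (idempotents to identities, elements to arrows, and conversely), the content of the statement is that, under these fixed object assignments, ordered functors correspond bijectively and naturally to prehomomorphisms. I would therefore exhibit two passages between the morphism classes and then check that they are mutually inverse and functorial.

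First I would send an ordered functor $F\colon\mathbf{G}\to\mathbf{H}$ to its action on arrows, $\mathcal{S}(F):=F_1\colon\mathcal{S}(\mathbf{G})\to\mathcal{S}(\mathbf{H})$, and verify it is a prehomomorphism. Writing $m=\dom(\alpha)\meet\cod(\beta)$ and $m'=\dom(F\alpha)\meet\cod(F\beta)$, monotonicity of $F$ gives $F(m)\le m'$ because $m\le\dom(\alpha)$ and $m\le\cod(\beta)$. Since any ordered functor preserves restrictions and corestrictions \cite[Proposition 4.1.2(1)]{lawson98}, the tensor (Definition \ref{def:orderedgroupoid_tensor}) is carried to
\[ F(\alpha\otimes\beta)=[F\alpha\,|_*\,F(m)][F(m)\,{}_*|\,F\beta]\le[F\alpha\,|_*\,m'][m'\,{}_*|\,F\beta]=F\alpha\otimes F\beta, \]
where the inequality is monotonicity of composition (Definition \ref{def:orderedgroupoid}(\ref{def:orderedgroupoid_compositionpreserve})) applied to the two further restrictions induced by $F(m)\le m'$. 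Thus $\mathcal{S}(F)$ is a prehomomorphism, and it is an honest homomorphism exactly when $F$ preserves the meet $m$, i.e.\ when $F$ is inductive, which recovers Theorem \ref{thm:ESN}.

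Conversely I would send a prehomomorphism $\phi\colon S\to T$ to the assignments $e\mapsto\phi(e)$ on objects and $s\mapsto\phi(s)$ on arrows of $\mathcal{G}(S)$, and the real work lies in showing that the \emph{lax} prehomomorphism already preserves \emph{strictly} all the data needed for this to be a genuine ordered functor. The key standard facts are: $\phi$ preserves idempotents (from $\phi(e)\le\phi(e)^2$ and the inverse-semigroup fact $y\le y^2\Rightarrow y=y^2$), and $\phi$ preserves inverses (from $\phi(s)\le\phi(s)\phi(s^\bullet)\phi(s)$ together with $x\le xyx\Rightarrow x=xyx$, so $\phi(s^\bullet)$ is the inverse of $\phi(s)$). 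Next, the identity $s=s\,(s^\bullet s)$ gives $\phi(s)\le\phi(s)\phi(s^\bullet s)\le\phi(s)$, hence $\phi(s)=\phi(s)\phi(s^\bullet s)$ and therefore $\phi(s)^\bullet\phi(s)=\phi(s^\bullet s)$ (the reverse inequality using inverse preservation); symmetrically $\phi(s)\phi(s)^\bullet=\phi(ss^\bullet)$. So $\phi(s)$ is genuinely an arrow $\phi(s^\bullet s)\to\phi(ss^\bullet)$ and domains and codomains match. Functoriality then follows because on a groupoid-composable pair ($ss^\bullet=t^\bullet t$) the inequality $\phi(ts)\le\phi(t)\phi(s)$ becomes equality: both sides have domain $\phi(s^\bullet s)$, and $a\le b$ with $a^\bullet a=b^\bullet b$ forces $a=b$. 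Since $\phi$ is order-preserving for the natural order, $\mathcal{G}(\phi)$ is an ordered functor (Definition \ref{def:ordered_functor}).

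Finally I would check that both passages are functorial and mutually inverse. Because $\mathcal{S}(F)$ is literally $F$ on arrows and $\mathcal{G}(\phi)$ is literally $\phi$ on elements, the round trips $\mathcal{S}\mathcal{G}$ and $\mathcal{G}\mathcal{S}$ act as the identity on morphisms, so they glue to the object-level natural isomorphisms of Theorem \ref{thm:ESN} to yield the asserted equivalence. I expect the main obstacle to be exactly the third paragraph: proving that the laxness of a prehomomorphism collapses to strict equality precisely on idempotents, inverses, the derived domain/codomain idempotents $\phi(s^\bullet s),\phi(ss^\bullet)$, and composable products. These antisymmetry arguments are what make $\mathcal{G}(\phi)$ a strict, order-preserving functor rather than a merely lax assignment, and they are the crux on which the whole correspondence turns.
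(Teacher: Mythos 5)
Your proposal is correct, but note that the paper does not actually prove this statement: it is quoted verbatim from Lawson's book (\cite{lawson98}, Theorem 8) with a \qed, so there is no in-paper argument to compare against. What you have written is a self-contained proof along the standard lines of the cited source: keep the ESN object correspondence fixed and match the enlarged morphism classes. Your key steps all check out. For the direction $F \mapsto \mathcal{S}(F)$, the chain $F(\alpha\otimes\beta)=[F\alpha\,|_*\,F(m)][F(m)\,{}_*|\,F\beta]\le[F\alpha\,|_*\,m']\,[m'\,{}_*|\,F\beta]$ is valid because $[F\alpha\,|_*\,F(m)]=[[F\alpha\,|_*\,m']\,|_*\,F(m)]\le[F\alpha\,|_*\,m']$ by uniqueness of restrictions, and then Definition \ref{def:orderedgroupoid}(\ref{def:orderedgroupoid_compositionpreserve}) applies. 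For the direction $\phi\mapsto\mathcal{G}(\phi)$, your antisymmetry lemmas are the crux and are correct: writing the natural order as $a\le b \Leftrightarrow a = aa^\bullet b$, one gets $y\le y^2\Rightarrow y=yy^\bullet y^2 = y^2$, $x\le xyx\Rightarrow x=xx^\bullet xyx=xyx$, and $a\le b,\ a^\bullet a = b^\bullet b\Rightarrow a=ba^\bullet a=bb^\bullet b=b$, which is exactly what forces strict preservation of idempotents, inverses, domains/codomains, and groupoid-composable products. Two points you gloss over but should flag as (routine) obligations: prehomomorphisms are order-preserving for the natural order (needed both for $\mathcal{G}(\phi)$ to be an \emph{ordered} functor and for composites of prehomomorphisms to be prehomomorphisms, i.e.\ functoriality of $\mathcal{G}$ on the larger morphism class), and the naturality squares of the ESN isomorphisms with respect to the enlarged morphism classes, which hold because the components are identities on underlying sets.
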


Since the arrows of an inverse category are playing the part of
``elements'' in each of the ``local inverse semigroups'', a clear
candidate for an inverse categorical analogue arises.
\begin{definition}
An \emph{oplax functor} $F : \mathbf{X} \rightarrow \mathbf{X'}$  of
inverse categories consists of the following data:
\begin{itemize}
\item for each object $A\in\mathbf{X},$ an object $F(A)\in\mathbf{X'};$
\item for each arrow $f : A \rightarrow B,$ an arrow
      $F(f) : F(A) \rightarrow F(B)$ such that
      \begin{itemize}
      \item for each composable pair $f : A \rightarrow B$ and
            $g : B\rightarrow C$ in $\mathbf{X},$ $F(gf) \leq F(g)F(f),$
            and
      \item for each object $A\in \mathbf{X},$ $F(1_A) \leq 1_{F(A)}.$
      \end{itemize}
\end{itemize}
\end{definition}

Clearly, since composition in $\mathcal{G}(\mathbf{X})$ is defined by
composition in $\mathbf{X},$ any oplax functor
$F: \mathbf{X} \rightarrow \mathbf{X'}$ between inverse categories
induces an ordered functor
$\mathcal{G}(F) : \mathcal{G}(\mathbf{X}) \rightarrow
 \mathcal{G}(\mathbf{X'}).$

Suppose now that $F: \mathbf{G} \rightarrow \mathbf{G'}$ is an ordered
functor between top-heavy locally inductive groupoids. Recall that
composition in $\mathcal{I}(\mathbf{G})$ is defined by the tensor
product in $\mathbf{G}.$ Then
\begin{align*}
F(g\otimes f)
&= F(g\, |_* \, \dom(g) \meet
   \cod(f)) F( \dom(g) \meet \cod(f) \, {}_*| \, f) \\
&= (Fg\, |_* \, F(\dom(g) \meet \cod(f))) ( F(\dom(g) \meet
   \cod(f)) \, {}_*| \, Ff) \\
&\leq (Fg\, |_* \, F(\dom(g) \meet F\cod(f)) ( F\dom(g) \meet
   F\cod(f) \, {}_*| \, Ff) \\
&= Fg \otimes Ff
\end{align*}
and thus $F$ induces an oplax functor
$\mathcal{I}(F) : \mathcal{I}(\mathbf{G}) \rightarrow
 \mathcal{I}(\mathbf{G'}).$
 Specifically, since the identities in $\mathcal{I}(\mathbf{G})$ are the
 top elements of $\mathbf{G},$ $\mathcal{I}(F)$ is strict on identities.

These arguments can then be easily extended to prove the following.

\begin{theorem} \label{thm:oplax_is}
The category of top-heavy locally inductive groupoids and ordered
functors is equivalent to the category of inverse categories and
oplax functors.
\end{theorem}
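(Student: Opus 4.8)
The plan is to re-run the proof of Theorem~\ref{thm:ircat_og_equivalence} on the enlarged morphism classes, noting that the object parts of $\mathcal G$ and $\mathcal I$ are unchanged and that the passage oplax $\leftrightarrow$ ordered on morphisms is exactly the content of the two displayed computations preceding the statement. First I would record that $F\mapsto\mathcal G(F)$ and $F\mapsto\mathcal I(F)$ are functorial: both act on arrows by $f\mapsto F(f)$ (reinterpreted in the target), so preservation of identities and composites is inherited from the functoriality of $F$ exactly as in Propositions~\ref{prop:groupoidfunctoriality} and~\ref{prop:inverserestrictionfunctoriality}. The only genuinely new well-definedness point is that $\mathcal G(F)$ sends identity arrows of $\mathcal G(\mathbf X)$ to identity arrows, i.e.\ that an oplax $F$ carries restriction idempotents to restriction idempotents. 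This I would check directly: for a restriction idempotent $\overline e$, oplaxness gives $F(\overline e)\le F(\overline e)\,F(\overline e)$, that is $F(\overline e)=F(\overline e)\,F(\overline e)\,\overline{F(\overline e)}=F(\overline e)\,F(\overline e)$, so $F(\overline e)$ is idempotent; and every idempotent $u$ of an inverse category is a restriction idempotent, since $uuu=u$ forces $u^\circ=u$ by the uniqueness of restricted inverses in Definition~\ref{def:inverse_category_nonrestriction}, whence $\overline u=u^\circ u=u$.

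With functoriality settled, the equivalence follows by showing, as in Theorem~\ref{thm:ircat_og_equivalence}, that $\mathcal G$ is essentially surjective and fully faithful for the new morphisms. Essential surjectivity is free: the isomorphism $\mathcal G\mathcal I(\mathbf G)\cong\mathbf G$ constructed there is locally inductive, hence a fortiori an ordered isomorphism, so it already exhibits $\mathbf G$ in the image of $\mathcal G$ up to isomorphism in the category of top-heavy locally inductive groupoids and ordered functors. Faithfulness is immediate because $\mathcal G(F)$ recovers $F$ through $f\mapsto F(f)$.

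The substantive step is fullness, which I would obtain by adapting the reconstruction of Corollary~\ref{cor:groupoid_functor}. Given an ordered functor $\Phi\colon\mathcal G(\mathbf X)\to\mathcal G(\mathbf X')$, the crucial observation---and the one place the top-heavy hypothesis is used---is that $\Phi$ still maps each meet-semilattice $E_A$ into a single $E_B$. Whereas Corollary~\ref{cor:groupoid_functor} deduced this from preservation of meets, here it follows from Definition~\ref{def:top_heavy_locally_inductive_groupoid}: by Proposition~\ref{prop:EA_meetsemilattice} every element of $E_A$ lies below the top $1_A$, an ordered functor preserves this comparability, and comparable objects share a meet-semilattice, so $\Phi(E_A)$ lies in the $E_B$ containing $\Phi(1_A)$. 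I would then set $F'(A)=B$ and $F'(f)=\Phi(f)$, reinterpreted as an arrow $F'(A)\to F'(B)$ of $\mathbf X'$, and verify $\mathcal G(F')=\Phi$ on objects and arrows.

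It remains to confirm that $F'$ is oplax, and this is precisely where the relaxation from locally inductive to ordered functors is absorbed. The composition law $F'(gf)\le F'(g)\,F'(f)$ is, via Lemma~\ref{lem:tensor_is_composition}, the inequality $\Phi(g\otimes f)\le\Phi(g)\otimes\Phi(f)$ already displayed before the statement, reflecting that an ordered functor preserves the meet $\dom(g)\meet\cod(f)$ only up to $\le$; and the unit law $F'(1_A)\le 1_{F'(A)}$ records merely that $\Phi(1_A)\in E_{F'(A)}$ sits below its top. The main obstacle throughout is this bookkeeping: verifying that the two oplax defects of $F'$ correspond exactly to the failure of $\Phi$ to preserve meets and top elements, and nothing more, so that $F\mapsto\mathcal G(F)$ is a bijection on hom-sets. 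Once this is in place, $\mathcal G$ is fully faithful and essentially surjective, hence an equivalence.
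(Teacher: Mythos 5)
Your overall architecture is reasonable, and the fullness step is the one genuinely good new idea in your write-up: replacing the meet-preservation argument of Corollary~\ref{cor:groupoid_functor} by the observation that every element of $E_A$ sits below the top $1_A$, that an ordered $\Phi$ preserves this comparability, and that Definition~\ref{def:top_heavy_locally_inductive_groupoid} then forces $\Phi(E_A)\subseteq E_B$ --- this is exactly where top-heaviness must enter. The gap is upstream, at the step you explicitly wave off. For an oplax $F$, the functoriality of $\mathcal{G}(F)$ is \emph{not} ``inherited from the functoriality of $F$'': $F$ is not a functor, and oplaxness gives only $F(gf)\leq F(g)F(f)$. Your substitute lemma (restriction idempotents go to restriction idempotents) is correct but far from sufficient. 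Two things are needed and neither follows from it: (i) well-definedness on arrows --- $\mathcal{G}(F)$ must send $f\colon\overline{f}\to\overline{f^\circ}$ to an arrow $F(\overline{f})\to F(\overline{f^\circ})$, but in $\mathcal{G}(\mathbf{X'})$ the arrow $F(f)$ runs $\overline{F(f)}\to\overline{F(f)^\circ}$, so you need $F(\overline{f})=\overline{F(f)}$ and $F(f^\circ)=F(f)^\circ$ for \emph{every} arrow $f$, not merely for restriction idempotents; (ii) strict preservation of composition --- when $\overline{g}=\overline{f^\circ}$, functoriality of $\mathcal{G}(F)$ demands the equality $F(gf)=F(g)F(f)$, where oplaxness hands you only an inequality. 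Without (i) and (ii), $\mathcal{G}$ is not even defined on oplax functors, so the faithfulness/fullness discussion has nothing to apply to.

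These two facts are the inverse-category analogues of the classical statements that a prehomomorphism of inverse semigroups automatically preserves inverses and preserves trace products on the nose; they are precisely the content hidden in the paper's ``clearly,'' and they do admit a proof, so the gap is fillable. Sketch: put $x=F(f)$, $y=F(f^\circ)$. Oplaxness applied to $f=f\overline{f}$ and $\overline{f}=f^\circ f$, together with your idempotent lemma and commutation of restriction idempotents, gives $x=xF(\overline{f})$ and $F(\overline{f})=yx\,F(\overline{f})$; hence $x=x\bigl(yx\,F(\overline{f})\bigr)=xyx$, and dually $y=yxy$, so $y=x^\circ$ by the uniqueness in Definition~\ref{def:inverse_category_nonrestriction}. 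Then $F(\overline{f})=yx\,F(\overline{f})=\overline{x}\,F(\overline{f})$ while $\overline{x}=\overline{x}\,F(\overline{f})$ by Lemma~\ref{lem:rc_props}(vii), whence $F(\overline{f})=\overline{F(f)}$. For (ii), when $\overline{g}=\overline{f^\circ}$ one computes (using $\overline{gf}=\overline{f}$ as in Construction~\ref{conts:orderedgroupoid}) that $\overline{F(gf)}=F(\overline{f})=\overline{F(g)F(f)}$, and in a restriction category $a\leq b$ with $\overline{a}=\overline{b}$ forces $a=b\overline{a}=b\overline{b}=b$; thus $F(gf)=F(g)F(f)$. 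You should state and prove this lemma before anything else; with it in place, your remaining steps (essential surjectivity via $\mathcal{G}\mathcal{I}(\mathbf{G})\cong\mathbf{G}$, faithfulness, and your fullness argument translated through Lemma~\ref{lem:tensor_is_composition}) do assemble into a proof.
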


\begin{note}
Since the 2-category structure of an inverse category is posetal,
pseudofunctors (oplax functors whose 2-cells are isomorphisms) are
exactly ordinary functors between inverse categories. The category of
inverse categories and pseudofunctors is therefore equal to the category
of inverse categories and ordinary functors, and equivalent to the
category of top-heavy locally inductive groupoids and locally inductive
functors.
\end{note}

\bibliographystyle{plain}
\bibliography{tac.bbl}

\end{document}